\theoremstyle{plain}
\newtheorem{theorem}{Theorem}[section]
\newtheorem{corollary}[theorem]{Corollary}
\newtheorem{proposition}[theorem]{Proposition}
\newcommand{\bnum}{\begin{enumerate}}
\newcommand{\enum}{\end{enumerate}}
\numberwithin{equation}{section}
\DeclareMathOperator{\diam}{diam}
\begin{document}
\title{\textbf{On $r$-noncommuting graph of  finite rings}}
\author{Rajat Kanti Nath$^{1,*}$, Monalisha Sharma$^1$, Parama Dutta$^2$ and \\ Yilun Shang$^{3,*}$}
\date{}
\maketitle
\vspace{-1cm}
\begin{center}\small{\it
$^1$Department of Mathematical Sciences, Tezpur University, Napaam-784028, Sonitpur, Assam, India;  rajatkantinath@yahoo.com (R.K.N); monalishasharma2013@gmail.com (M.S.) 

$^2$Department of Mathematics,   Lakhimpur Girls’ College, Lakhimpur 787031, \\ Assam, India;  parama@gonitsora.com (P.D.)

$^{3}$ Department of Computer and Information Sciences, Northumbria
University, Newcastle NE1 8ST, UK; 
yilun.shang@northumbria.ac.uk (Y.S.)

\thanks{$^*$Corresponding Authors}
}
\end{center}

\begin{abstract}
Let $R$ be a finite ring and $r\in R$. The $r$-noncommuting graph of $R$, denoted by $\Gamma_R^r$,  is a simple undirected graph whose vertex set is $R$ and two vertices $x$ and $y$ are adjacent if and only if $[x,y] \neq r$ and $-r$. In this paper, we study several properties of $\Gamma_R^r$. We show that $\Gamma_R^r$ is not  a regular graph, a lollipop graph and complete bipartite graph. Further, we consider an induced subgraph of $\Gamma_R^r$ (induced by the non-central elements of $R$) and obtained some characterizations of $R$.
\end{abstract}

\noindent {\small{\textit{Key words:}  finite ring, noncommuting graph, isoclinism.}}

\noindent {\small{\textit{2010 Mathematics Subject Classification:}
05C25, 16U70}}

\section{Introduction}
Throughout the paper $R$ denotes a finite ring and $r\in R$. Let $Z(R):= \{z\in R:zr = rz \text{ for all } r\in R\}$ be the center of $R$. For any element $x \in R$, the centralizer of $x$ in $R$ is a subring given by $C_R(x) : = \{y \in R : xy = yx\}$. Clearly, $Z(R) = \underset{x \in R}{\cap}C_R(x)$. For any two elements $x$ and $y$ of $R$, $[x,y] := xy - yx$ is called the additive commutator of $x$ and $y$. Let $K(R) = \{[x,y]: x,y\in R\}$ and   $[R, R]$ and $[x, R]$ for $x \in R$  denote the additive subgroups of $(R, +)$ generated by the sets $K(R)$ and $\lbrace [x, y] : y\in R\rbrace$ respectively.

The study of graphs defined on algebraic structures have been an active topic of research in the last few decades. Recently, Erfanian, Khashyarmanesh and Nafar \cite{erfanian1} considered noncommuting graphs of finite rings. Recall that the noncommuting graph of a finite noncommutative ring $R$ is a simple undirected graph whose vertex set is $R \setminus Z(R)$ and two vertices $x$ and $y$ are adjacent if and only if $xy\neq yx$. A generalization of this graph can be found in \cite{jutirekha2}. 
The complement of noncommuting graph, called commuting graph,  of a finite noncommutative ring is considered in   \cite{dutta-nath} and \cite{o&v}.

 In this paper, we introduce and study $r$-noncommuting graph of a finite ring $R$ for any given element $r \in R$. The  $r$-noncommuting graph of $R$, denoted by $\Gamma_R^r$, is a simple undirected graph whose vertex set is $R$ and two vertices $x$ and $y$ are adjacent if and only if $[x,y] \neq r$ and $-r$. Clearly, $\Gamma_R^r = \Gamma_R^{-r}$. If $r = 0$ then the induced subgraph of $\Gamma_R^r$  with vertex set $R \setminus Z(R)$, denoted by $\Delta^r_R$, is nothing but the noncommuting graph of $R$.   Note that $\Gamma_R^r$ is $0$-regular graph if $r = 0$ and $R$ is  commutative. Also,  $\Gamma_R^r$ is complete if $r \notin K(R)$. Thus for $r \notin K(R)$,  $\Gamma_R^r$ is $n$-regular if and only if $R$ is  of order $n + 1$. Therefore throughout the paper we shall consider $r \in K(R)$.   The motivation of this paper lies in \cite{nEGJ-16,nEGJ-17,nEM-18,tolue} where analogous notion of this graph is studied in case of finite groups.

 In Section 2, we first compute the degree of any vertex of $\Gamma_R^r$ in terms of its centralizers. Then we characterize $R$ if   $\Gamma_R^r$ is a tree or a star graph. We further show that   $\Gamma_R^r$ is not a  regular graph (if $r\in K(R)$), a lollipop graph and complete bipartite graph for any noncommutative ring $R$. In Section 3, we show that $\Gamma_{R_1}^{r}$ is isomorphic to $\Gamma_{R_2}^{\psi(r)}$ if $(\phi,\psi)$ is an isoclinism between two finite rings $R_1$ and $R_2$   such that $|Z(R_1)| = |Z(R_2)|$. In Section 4, we consider $\Delta^r_R$ and obtain some characterization of $R$ along with other results. As a consequence of our results we determine some positive integers $n$ such that the noncommuting graph of $R$  is $n$-regular and give some characterizations of such rings.

 It was shown in \cite{fine93} that there are only two noncommutative rings (up to isomorphism) having order $p^2$, where $p$ is a prime, and the rings are given by
\[
E(p^2) = \langle a, b : pa = pb = 0, a^2 = a, b^2 = b, ab = a, ba = b \rangle
\]
\[
\text{and } F(p^2) = \langle x, y : px = py = 0, x^2 = x, y^2 = y, xy = y, yx = x \rangle.
\]
Following figures show the graphs $\Gamma_{E(p^2)}^r$ for $p = 2, 3$.

\begin{minipage}[t]{.005\linewidth}
\begin{tikzpicture}[scale=0.25]
\renewcommand*{\VertexSmallMinSize}{1 pt}
\GraphInit[vstyle=Welsh]
\Vertices[unit=3]{circle}{$a$,$b$,$a+b$,$0$}
\AddVertexColor{black}{$a$,$b$,$a+b$,$0$}

\Edges($a$,$b$)
\Edges($a$,$a+b$)

\Edges($b$,$a+b$)

\end{tikzpicture}

\end{minipage}
\hspace{6.5cm}
\begin{minipage}[t]{.005\linewidth}
\begin{tikzpicture}[scale=0.25]
\renewcommand*{\VertexSmallMinSize}{1 pt}
\GraphInit[vstyle=Welsh]
\Vertices[unit=3]{circle}{$a$,$b$,$a+b$,$0$}
\AddVertexColor{black}{$a$,$b$,$a+b$,$0$}

\Edges($0$,$a$)
\Edges($0$,$a+b$)

\Edges($b$,$0$)
\end{tikzpicture}
\end{minipage}
\begin{center}
\small{Figure 1: $\Gamma_{E(4)}^0$ \hspace{4.5cm}   Figure 2: $\Gamma_{E(4)}^{a+b}$}
\end{center}

\begin{minipage}[t]{.005\linewidth}

\begin{tikzpicture}[scale=0.4]
\renewcommand*{\VertexSmallMinSize}{.02 mm}
\GraphInit[vstyle=Welsh]
\Vertices[unit=3]{circle}{$2a$,$b$,$2b$,$a+b$,$2a+2b$,$a+2b$,$2a+b$,$0$,$a$}
\AddVertexColor{black}{$2a$,$b$,$2b$,$a+b$,$2a+2b$,$a+2b$,$2a+b$,$0$,$a$}

\Edges($a$,$b$)
\Edges($a$,$2b$)
\Edges($a$,$a+b$)
\Edges($a$,$2a+2b$)
\Edges($a$,$a+2b$)
\Edges($a$,$2a+b$)
\Edges($2a$,$b$)
\Edges($2a$,$2b$)
\Edges($2a$,$a+b$)
\Edges($2a$,$2a+2b$)
\Edges($2a$,$a+2b$)
\Edges($2a$,$2a+b$)
\Edges($b$,$a+b$)
\Edges($b$,$2a+2b$)
\Edges($b$,$a+2b$)
\Edges($b$,$2a+b$)
\Edges($2b$,$a+b$)
\Edges($2b$,$2a+2b$)
\Edges($2b$,$a+2b$)
\Edges($2b$,$2a+b$)
\Edges($a+b$,$a+2b$)
\Edges($a+b$,$2a+b$)
\Edges($2a+2b$,$a+2b$)
\Edges($2a+2b$,$2a+b$)
\end{tikzpicture}

\end{minipage}
\hspace{6.5cm}
\begin{minipage}[t]{.005\linewidth}
\begin{tikzpicture}[scale=0.4]
\renewcommand*{\VertexSmallMinSize}{1 pt}
\GraphInit[vstyle=Welsh]
\Vertices[unit=3]{circle}{$2a$,$b$,$2b$,$a+b$,$2a+2b$,$a+2b$,$2a+b$,$0$,$a$}
\AddVertexColor{black}{$2a$,$b$,$2b$,$a+b$,$2a+2b$,$a+2b$,$2a+b$,$0$,$a$}
\Edges($a$,$2a$)
\Edges($a$,$0$)
\Edges($2a$,$0$)
\Edges($b$,$0$)
\Edges($b$,$2b$)
\Edges($2b$,$0$)
\Edges($a+b$,$0$)
\Edges($a+b$,$2a+2b$)
\Edges($2a+2b$,$0$)
\Edges($a+2b$,$2a+b$)
\Edges($a+2b$,$0$)
\Edges($2a+b$,$0$)
\end{tikzpicture}
\end{minipage}
\begin{center}
\hspace{1.5 cm} \small{Figure 3: $\Gamma_{E(9)}^0$ \hspace{3.8cm}   Figure 4: $\Gamma_{E(9)}^{a+2b} = \Gamma_{E(9)}^{2a+b}$}
\end{center}

It may be noted here that the graphs $\Gamma_{F(4)}^0, \Gamma_{F(4)}^{x+y}, \Gamma_{F(9)}^0$ and $\Gamma_{F(9)}^{x+2y}$ are isomorphic to $\Gamma_{E(4)}^0, \Gamma_{E(4)}^{a+b}, \Gamma_{E(9)}^0$ and $\Gamma_{E(9)}^{a+2b}$ respectively.

\section{Some properties}
In this section, we characterize $R$ if   $\Gamma_R^r$ is a tree or a star graph. We also show the non-existence of finite noncommutative rings $R$ whose $r$-noncommuting graph is a   regular graph (if $r\in K(R)$), a lollipop graph or a complete bipartite graph. However, we first compute degree of any vertices in the graph $\Gamma_R^r$.  For any two given elements $x$ and $r$ of $R$, we write $T_{x, r}$ to denote the generalized centralizer $\{y\in R : [x, y] = r\}$ of $x$. 
The following proposition gives  degree of any vertices  of $\Gamma_R^r$ in terms of its  generalized centralizers.
\begin{proposition}\label{deg_prop_1}
Let $x$ be any vertex in  $\Gamma_R^r$.  Then
\begin{enumerate}
\item  $\deg(x)= |R| - |C_R(x)|$ if $r=0$.
\item if $r \neq 0$ then
    $\deg(x) = \begin{cases}
       |R| - |T_{x, r}| - 1, & \mbox{if $2r= 0$} \\
       |R| - 2|T_{x, r}| -  1, & \mbox{if $2r \neq 0$}.
     \end{cases} $
\end{enumerate}
\end{proposition}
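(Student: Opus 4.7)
My plan is to count, for a fixed vertex $x$, the elements $y \in R \setminus \{x\}$ that fail to be adjacent to $x$, and then subtract this count from $|R| - 1$. By the definition of $\Gamma_R^r$, a vertex $y \neq x$ is non-adjacent to $x$ precisely when $[x,y] \in \{r, -r\}$, so the count of non-neighbors reduces to understanding $T_{x,r}$ and $T_{x,-r}$.

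First I would dispose of part (a), where $r = 0$. In this case non-adjacency becomes $[x,y] = 0$, i.e.\ $y \in C_R(x)$. Since $x \in C_R(x)$ automatically and $x$ is not counted as its own neighbor, the set of non-neighbors is exactly $C_R(x)$ itself, which immediately gives $\deg(x) = |R| - |C_R(x)|$.

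For part (b), I would split the argument according to whether $r = -r$, i.e.\ whether $2r = 0$. The key preliminary observation is that $x \notin T_{x,r} \cup T_{x,-r}$, because $[x,x] = 0 \neq \pm r$ when $r \neq 0$; so removing $x$ contributes exactly the $-1$ in both subcases. If $2r = 0$, then $r = -r$ forces $T_{x,r} = T_{x,-r}$, yielding $\deg(x) = |R| - 1 - |T_{x,r}|$. If $2r \neq 0$, the two generalized centralizers are disjoint, and the heart of the argument is to show they have the same cardinality; for this I would exhibit the involution $y \mapsto -y$, noting that $[x,-y] = -[x,y]$ by the distributive law, which sends $T_{x,r}$ bijectively onto $T_{x,-r}$. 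This gives $|T_{x,r}| = |T_{x,-r}|$ and hence $\deg(x) = |R| - 1 - 2|T_{x,r}|$.

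There is really no serious obstacle in this proposition; the argument is essentially bookkeeping on the level sets of the map $y \mapsto [x,y]$. The only point that deserves mild attention is the bijection $y \mapsto -y$ between $T_{x,r}$ and $T_{x,-r}$ in the $2r \neq 0$ case, and this is settled in one line by the distributive law.
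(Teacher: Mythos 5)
Your proposal is correct and follows essentially the same route as the paper: count the non-neighbors of $x$ via the level sets $T_{x,r}$ and $T_{x,-r}$, treat $2r=0$ and $2r\neq 0$ separately, and use the bijection $y \mapsto -y$ to get $|T_{x,r}| = |T_{x,-r}|$ in the latter case. Your explicit remark that $x \notin T_{x,r} \cup T_{x,-r}$ (since $[x,x]=0\neq \pm r$) is a small clarification the paper leaves implicit, but the argument is the same.
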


\begin{proof}
(a) If $r = 0$  then $\deg(x)$ is the number of $y \in R$ such that $xy \ne yx$. Note that $|C_R(x)|$ gives the number of elements that commute with $x$. Hence, $\deg(x)= |R| - |C_R(x)|$.

(b) Consider the case when  $r \neq 0$. If  $2r = 0$ then $r = -r$. Note that $y \in R$ is not adjacent to $x$  if and only if $y = x$ or $y \in T_{x, r}$. Therefore, $\deg(x) = |R| - |T_{x, r}| - 1$. If  $2r \neq 0$ then $r \neq -r$. It is easy to see that $T_{x, r} \cap T_{x, -r} = \emptyset$ and  $y \in T_{x, r}$ if and only if $-y \in T_{x, -r}$. Therefore, $|T_{x, r}| = |T_{x, -r}|$. Note that $y \in R$ is not adjacent to $x$  if and only if $y = x$ or $y \in T_{x, r}$ or $y \in T_{x, -r}$. Therefore, $\deg(x) = |R| - |T_{x, r}| - |T_{x, -r}| - 1$. Hence the result follows.
\end{proof}
 The following corollary  gives  degree of any vertices  of $\Gamma_R^r$ in terms of its centralizers.
\begin{corollary}\label{deg_cor_1}
Let $x$ be any vertex in  $\Gamma_R^r$.   
\begin{enumerate}
\item If $r \neq 0$ and  $2r= 0$ then
$\deg(x) = \begin{cases}
           |R| - 1, & \mbox{if $T_{x, r} = \emptyset$}  \\
           |R| - |C_R(x)| - 1, & \mbox{otherwise}.
\end{cases}$
\item If $r \neq 0$ and  $2r \neq 0$ then
$\deg(x) = \begin{cases}
           |R| - 1, & \mbox{if $T_{x, r} = \emptyset$}  \\
           |R| - 2|C_R(x)| - 1, & \mbox{otherwise}.
\end{cases}$
\end{enumerate}
\end{corollary}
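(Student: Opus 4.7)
The plan is to reduce both parts to Proposition \ref{deg_prop_1}(b) by showing that the only new content is the identity $|T_{x,r}| = |C_R(x)|$ whenever $T_{x,r} \neq \emptyset$, together with the observation that an empty $T_{x,r}$ kills all the subtracted terms.

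First I would handle the trivial case. If $T_{x,r} = \emptyset$ then no $y \in R$ satisfies $[x,y] = r$. In the case $2r \neq 0$, I also need $T_{x,-r} = \emptyset$, which follows from the bijection $y \mapsto -y$ between $T_{x,r}$ and $T_{x,-r}$ already noted in the proof of Proposition \ref{deg_prop_1}. Hence every vertex $y \neq x$ satisfies $[x,y] \notin \{r,-r\}$ and so is adjacent to $x$, giving $\deg(x) = |R| - 1$ in both (a) and (b).

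Next, assuming $T_{x,r} \neq \emptyset$, the key step is the bijection $\varphi \colon C_R(x) \to T_{x,r}$ defined by $\varphi(c) = y_0 + c$, where $y_0$ is any fixed element of $T_{x,r}$. This is well defined because $[x, y_0 + c] = [x,y_0] + [x,c] = r + 0 = r$, and it is bijective with inverse $y \mapsto y - y_0$, since $[x, y - y_0] = r - r = 0$ forces $y - y_0 \in C_R(x)$. Therefore $|T_{x,r}| = |C_R(x)|$. Substituting into the formulas from Proposition \ref{deg_prop_1}(b) yields $\deg(x) = |R| - |C_R(x)| - 1$ in the case $2r = 0$ and $\deg(x) = |R| - 2|C_R(x)| - 1$ in the case $2r \neq 0$.

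There is no real obstacle; the only thing to be careful about is the well-definedness and bijectivity of the coset-type map $\varphi$, which rests on the bilinearity of the additive commutator $[x, \cdot]$. Once that is noted, both (a) and (b) follow directly by case analysis on whether $T_{x,r}$ is empty and substitution into the preceding proposition.
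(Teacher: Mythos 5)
Your proposal is correct and follows essentially the same route as the paper: the paper also fixes $t \in T_{x,r}$ and shows $T_{x,r} = t + C_R(x)$ (your bijection $\varphi$ in coset form), then substitutes $|T_{x,r}| = |C_R(x)|$ into Proposition~\ref{deg_prop_1}(b). Your explicit handling of the empty case, including noting $T_{x,-r} = \emptyset$ via $y \mapsto -y$, is a minor extra care the paper leaves implicit but is the same argument in substance.
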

\begin{proof}
Notice that $T_{x, r} \ne \emptyset$ if and only if $r \in [x, R]$. Suppose that $T_{x, r}\neq \emptyset$.  Let $t \in T_{x, r}$ and  $p \in t + C_R(x)$. Then $[x, p] = r$ and so $p \in T_{x, r}$. Therefore,  $t + C_R(x) \subseteq T_{x, r}$. Again, if $y \in T_{x, r}$ then  $(y - t) \in C_R(x)$ and so $y \in t + C_R(x)$. Therefore, $T_{x, r}\subseteq t + C_R(x)$. Thus $|T_{x, r}| = |C_R(x)|$ if $T_{x, r}\neq \emptyset$. Hence the result follows from  Proposition \ref{deg_prop_1}.
\end{proof}




\begin{proposition}
  Let $R$ be a ring with unity. The $r$-noncommuting graph $\Gamma_R^r$ is  a tree if and only if $|R| = 2$ and $r\neq 0$.
\end{proposition}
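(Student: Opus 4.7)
The plan is to treat the two directions separately. For the easy direction, a ring with unity of order $2$ must be $\mathbb{Z}_2 = \{0, 1\}$, which is commutative; the two vertices are adjacent precisely when $[0, 1] = 0 \notin \{r, -r\}$, which happens exactly when $r \ne 0$. Thus $\Gamma_R^r = K_2$, a tree.

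For the converse, I would first rule out $r = 0$. In $\Gamma_R^0$ every element of $Z(R)$ is isolated, since a central element commutes with every element of $R$. Because $R$ has unity with $1 \ne 0$, the elements $0$ and $1$ are two distinct isolated vertices in $\Gamma_R^0$, so the graph is disconnected and cannot be a tree. Hence any ring with unity whose $r$-noncommuting graph is a tree must satisfy $r \ne 0$.

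Now assume $r \ne 0$ and that $\Gamma_R^r$ is a tree. The crucial observation is that unity provides two universal vertices. Indeed, for $x \in \{0, 1\}$ one has $C_R(x) = R$, so $[x, y] = 0 \ne r$ for every $y \in R$; this says $T_{x, r} = \emptyset$, and Corollary~\ref{deg_cor_1} then yields $\deg(0) = \deg(1) = |R| - 1$. If $|R| \ge 3$, pick any $z \in R \setminus \{0, 1\}$: then $z$ is adjacent to both $0$ and $1$, while $0$ is adjacent to $1$, producing a triangle $\{0, 1, z\}$. This contradicts acyclicity of a tree, so $|R| \le 2$, and hence $|R| = 2$.

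There is no serious obstacle here once the degree formula from Corollary~\ref{deg_cor_1} is available. The only point one must take care not to overlook is the role of the unity $1$: it supplies a second central element alongside $0$, and these two elements automatically become universal vertices whenever $r \ne 0$, forcing a triangle as soon as $|R| \ge 3$.
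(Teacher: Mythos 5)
Your proof is correct and rests on the same key observation as the paper's: when $r\neq 0$, the central elements $0$ and $1$ are adjacent to every other vertex, while for $r=0$ they are isolated. The only (cosmetic) difference is that you avoid the paper's split into commutative and noncommutative cases by exhibiting an explicit triangle $\{0,1,z\}$ whenever $|R|\geq 3$, whereas the paper handles the noncommutative case via the bound $\deg(x)\geq 2$ for all $x$; both routes are equally valid.
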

\begin{proof}
If $r = 0$ then, by Proposition \ref{deg_prop_1}(a), we have $\deg(r) = 0$. Hence, $\Gamma_R^r$ is not a tree. Suppose that $r\neq 0$. If $R$ is commutative then $r \notin K(R)$. Hence, $\Gamma_R^r$ is  complete graph. Therefore $\Gamma_R^r$ is a tree if and only if $|R|=2$. If $R$ is noncommutative  then $[x, 0] \neq r, - r$ and  $[x, 1] \neq r, - r$ for any $x \in R$. Therefore $\deg(x) \geq 2$ for all $x \in R$.  Hence, $\Gamma_R^r$ is not a tree.
\end{proof}

\begin{proposition}\label{lollipop}
If $R$ is non-commutative then $\Gamma_{R}^{r}$ is not a lollipop graph.
\end{proposition}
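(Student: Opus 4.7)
The plan is to case-split on whether $r = 0$ or $r \neq 0$ and, in each case, show that $\Gamma_R^r$ fails some defining feature of a lollipop graph, namely connectedness together with the existence of a pendant vertex attached to a clique of size at least three.

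For $r = 0$, Proposition \ref{deg_prop_1}(a) gives $\deg(z) = |R| - |C_R(z)| = 0$ for every $z \in Z(R)$, so each central element is an isolated vertex of $\Gamma_R^0$; since $R$ is non-commutative, at least one pair of non-central elements is adjacent. Hence $\Gamma_R^0$ is disconnected and so cannot be a lollipop. This case is immediate.

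For $r \neq 0$, I argue by contradiction. If $\Gamma_R^r$ is a lollipop, it contains a pendant $x$ with $\deg(x) = 1$, and Corollary \ref{deg_cor_1} leaves only two possibilities: $\deg(x) = |R| - 1$, which would force $|R| = 2$ and hence $R$ commutative (excluded), or $k|C_R(x)| = |R| - 2$, with $k = 1$ if $2r = 0$ and $k = 2$ otherwise. Since $C_R(x)$ is an additive subgroup of $R$, $|C_R(x)|$ divides $|R|$, and a short divisibility check in each subcase pins $|R|$ down to the list $\{3, 4, 6\}$. Orders $3$ and $6$ admit only commutative rings, so $|R| = 4$ must hold; the non-commutative rings of this order are $E(4)$ and $F(4)$, both of characteristic $2$, which is consistent with the forced constraint $2r = 0$. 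The only non-zero element of $K(E(4))$ is $a + b$ (and $x + y$ for $F(4)$), and Figure $2$ identifies $\Gamma_{E(4)}^{a+b}$ as the star $K_{1,3}$, which is triangle-free and hence contains no $K_m$ with $m \geq 3$. The $F(4)$ case follows from the isomorphism noted right after Figure $4$.

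The main obstacle lies in the divisibility/classification step of the second case: turning $\deg(x) = 1$ into a divisibility constraint that cuts $|R|$ down to $\{3, 4, 6\}$, and then closing off each possibility — the prime and squarefree orders by commutativity, and $|R| = 4$ by appealing to the explicit picture in Figure $2$. Everything else is a direct consequence of the degree formulas already recorded in Proposition \ref{deg_prop_1} and Corollary \ref{deg_cor_1}.
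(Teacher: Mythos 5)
Your proof is correct and follows essentially the same route as the paper: locate a pendant vertex, feed $\deg(x)=1$ into Proposition \ref{deg_prop_1} and Corollary \ref{deg_cor_1}, cut $|R|$ down to $4$, and observe that the resulting graph is the star $K_{1,3}$, which is not a lollipop. The only cosmetic differences are that you dispose of $r=0$ via the isolated central vertices (disconnectedness) where the paper counts degrees of non-central elements, and you use divisibility of $|C_R(x)|$ in $|R|$ where the paper uses the bound $|C_R(x)|\le |R|/2$; both variants are sound.
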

\begin{proof}
Let $\Gamma_{R}^{r}$ be a lollipop graph. Then there exists an element $x\in R$ such that $\deg(x) = 1$. If $r = 0$ then $x \notin Z(R)$ and so $|C_R(x)| \leq \frac{|R|}{2}$. Also, by Proposition \ref{deg_prop_1}(a), we have $\deg(x) = |R|-|C_R(x)|$. These give  $|R|-|C_R(x)| = 1$. Hence, $|R| \leq 2$,  a contradiction.

 If $r \neq 0$  then, by Corollary \ref{deg_cor_1}, we have  $\deg(x) = |R| - 1, |R| - |C_R(x)| - 1$ or $|R| - 2|C_R(x)| - 1$. These give $|R| - |C_R(x)|  = 2$ or $|R| - 2|C_R(x)|  = 2$. Clearly $x \notin Z(G)$ and so $|C_R(x)| \leq \frac{|R|}{2}$. Therefore, if $|R| - |C_R(x)|  = 2$ then $|R| \leq 4$. If $|R| - 2|C_R(x)|  = 2$ then $|R|$ is even and  $|C_R(x)| \leq \frac{|R|}{2}$. Therefore,  $|R| \leq 6$. Since $R$ is noncommutative we have $|R| = 4$.
Therefore, if $r \neq 0$ then  $\Gamma_{R}^{r}$ is isomorphic to a star graph on $4$ vertices. Hence, $\Gamma_{R}^{r}$ is not a lollipop graph.
\end{proof}
The proof of Proposition \ref{lollipop} also ensures the following result.
\begin{proposition}
Let  $R$ be a noncommutative ring. Then $\Gamma_{R}^{r}$ is  a star graph if and only if $R$ is isomorphic to $
E(4) = \langle a, b : 2a = 2b = 0, a^2 = a, b^2 = b, ab = a, ba = b \rangle
$
or
$
F(4) = \langle a, b : 2a = 2b = 0, a^2 = a, b^2 = b, ab = b, ba = a \rangle
$.
\end{proposition}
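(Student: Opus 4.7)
The forward direction is essentially handed to us by the proof of Proposition~\ref{lollipop}. My plan is: assume $\Gamma_R^r$ is a star graph on $n$ vertices, so it contains at least one leaf, i.e.\ a vertex $x$ with $\deg(x)=1$. I would then rerun the case analysis from Proposition~\ref{lollipop} verbatim. If $r=0$, Proposition~\ref{deg_prop_1}(a) and $|C_R(x)|\le |R|/2$ force $|R|\le 2$, which contradicts noncommutativity. If $r\neq 0$, Corollary~\ref{deg_cor_1} gives $|R|-|C_R(x)|=2$ or $|R|-2|C_R(x)|=2$, and combined with $|C_R(x)|\le |R|/2$ this forces $|R|\le 4$. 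Since $R$ is noncommutative, $|R|=4$.

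At this point I invoke Fine's classification from \cite{fine93}: the only noncommutative rings of order $p^2$ are $E(p^2)$ and $F(p^2)$, so for $p=2$ we conclude $R\cong E(4)$ or $R\cong F(4)$. This finishes the ``only if'' direction.

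For the converse, I need to exhibit a suitable $r\in K(R)$ for which $\Gamma_R^r$ is a star. In $E(4)$, note that $[a,b]=ab-ba=a-b=a+b$ (using $2b=0$), so $r:=a+b\in K(E(4))$. Figure~2 in the excerpt already displays $\Gamma_{E(4)}^{a+b}$ as a star with centre $0$ and leaves $a,b,a+b$, so $\Gamma_{E(4)}^{r}$ is indeed a star graph. The paper has already noted that $\Gamma_{F(4)}^{x+y}$ is isomorphic to $\Gamma_{E(4)}^{a+b}$, so $\Gamma_{F(4)}^{x+y}$ is also a star graph. One could alternatively verify this directly using Corollary~\ref{deg_cor_1}: for $R=E(4)$ and $r=a+b$ one checks that $T_{x,r}\neq\emptyset$ and $|C_R(x)|=2$ for each $x\neq 0$, giving $\deg(x)=|R|-2|C_R(x)|-1=\ldots$ — but since $2r=0$ here, it is the first case of Corollary~\ref{deg_cor_1}(a) that applies, yielding $\deg(x)=1$, and $\deg(0)=3$ as $T_{0,r}=\emptyset$.

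The only mild obstacle is bookkeeping: the formula to use in the backward direction depends on whether $2r=0$, and in both $E(4)$ and $F(4)$ we have characteristic~$2$ so $2r=0$ automatically, which simplifies the verification. Everything else is just reading off degrees from the already-displayed figures or from the short centralizer computation.
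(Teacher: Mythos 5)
Your proposal is correct and takes essentially the same route as the paper, which derives this proposition directly from the case analysis in the proof of Proposition~\ref{lollipop} (degree-one vertex forces $|R|=4$, Fine's classification gives $E(4)$ or $F(4)$, and Figure~2 supplies the converse). One small quibble: in the subcase $|R|-2|C_R(x)|=2$ the bound $|C_R(x)|\le |R|/2$ only yields $|R|\le 6$ rather than $|R|\le 4$, but your subsequent appeal to noncommutativity still forces $|R|=4$, exactly as in the paper.
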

\noindent In fact, if $R$ is noncommutative having more than four elements  then  there is no vertex of degree one in $\Gamma_{R}^{r}$.

It is observed that $\Gamma_{R}^{r}$ is $(|R| - 1)$-regular if $r \notin K(R)$. Also, if $r = 0$ and $R$ is commutative then $\Gamma_{R}^{r}$ is $0$-regular.    In the following proposition, we show that $\Gamma_{R}^{r}$ is not regular if $r \in K(R)$.

\begin{proposition}
Let $R$ be a noncommutative ring and $r\in K(R)$. Then  $\Gamma_{R}^{r}$ is not regular.
\end{proposition}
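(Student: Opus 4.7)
The plan is to exhibit, in each case, two vertices of $\Gamma_R^r$ whose degrees differ; the necessary degree formulas are already packaged in Proposition~\ref{deg_prop_1} and Corollary~\ref{deg_cor_1}, so the argument will essentially be a case split on whether $r=0$.

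First I would dispose of $r=0$. Here Proposition~\ref{deg_prop_1}(a) gives $\deg(x)=|R|-|C_R(x)|$. Taking $x=0$, which lies in $Z(R)$, forces $C_R(0)=R$ and hence $\deg(0)=0$. Since $R$ is noncommutative, some $y\in R$ has $C_R(y)\subsetneq R$ and therefore $\deg(y)\geq 1$, so regularity fails.

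The remaining case $r\ne 0$ is handled in the same spirit. For any $z\in Z(R)$, and in particular $z=0$, we have $[z,y]=0\ne r$ for every $y\in R$, so $T_{z,r}=\emptyset$; the first branch of Corollary~\ref{deg_cor_1} then gives $\deg(z)=|R|-1$. On the other hand, since $r\in K(R)$, I can pick $a,b\in R$ with $[a,b]=r$, which exhibits $b\in T_{a,r}$, so $T_{a,r}\ne\emptyset$. The second branch of the same corollary then gives $\deg(a)\in\{|R|-|C_R(a)|-1,\;|R|-2|C_R(a)|-1\}$, and in either alternative $|C_R(a)|\geq 1$ (because $0\in C_R(a)$) forces $\deg(a)\leq|R|-2<\deg(z)$.

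I do not expect a serious obstacle. The only point requiring a word of justification is the observation that $T_{z,r}=\emptyset$ whenever $z\in Z(R)$ and $r\ne 0$, which is immediate because commutators with a central element vanish. Everything else reduces to a direct arithmetic comparison of the two expressions supplied by Corollary~\ref{deg_cor_1}.
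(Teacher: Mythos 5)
Your proposal is correct and follows essentially the same route as the paper: in the case $r=0$ both compare the degree of the central vertex $0$ with that of a non-central element via Proposition~\ref{deg_prop_1}(a), and in the case $r\neq 0$ both observe that $T_{0,r}=\emptyset$ gives $\deg(0)=|R|-1$ while any $a$ with $T_{a,r}\neq\emptyset$ has strictly smaller degree because $|C_R(a)|\geq 1$. No substantive differences.
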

\begin{proof}
If $r = 0$ then, by Proposition \ref{deg_prop_1}(a), we have $\deg(r) = 0$. Let $x \in R$ be a non-central element. Then $|C_R(x)| \neq |R|$. Therefore, by Proposition \ref{deg_prop_1}(a), $\deg(x) \neq 0 = \deg(r)$. This shows that $\Gamma_{R}^{r}$ is not regular.  If $r \neq 0$ then $T_{0,r} = \emptyset$. Therefore, by Corollary \ref{deg_cor_1}, we have $\deg(0) = |R| - 1$. Since $r\in K(R)$, there exists $0 \neq x \in R$ such that $T_{x,r} \neq \emptyset$. Therefore, by Corollary \ref{deg_cor_1}, we have $\deg(x) = |R| - |C_R(x)| - 1$ or $|R| - 2|C_R(x)| - 1$. If $\Gamma_{R}^{r}$ is regular then $\deg(x) = \deg(0)$. Therefore
\[
|R| - |C_R(x)| - 1 = |R| - 2|C_R(x)| - 1 = |R| -  1
\]
which gives $|C_R(x)| = 0$, a contradiction. Hence, $\Gamma_{R}^{r}$ is not regular. This completes the proof.
\end{proof}

We conclude this section with the following result.
\begin{proposition}
Let $R$ be a finite ring.
\begin{enumerate}
\item If $r=0$ then $\Gamma_{R}^{r}$ is  not complete bipartite.
\item If $r\neq 0$ then $\Gamma_{R}^{r}$ is  not complete bipartite for $|R|\geq 3$ with $|Z(R)|\geq 2$.
\end{enumerate}
\end{proposition}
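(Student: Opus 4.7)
The overall approach is to argue by contradiction in both parts, exploiting how central elements behave in $\Gamma_R^r$. The key observation is that $0\in Z(R)$ is always available, and central elements have highly constrained degrees: they are isolated when $r=0$ (since they commute with every vertex), and adjacent to every other vertex when $r\neq 0$ (since $[z,y]=0\neq \pm r$ for every $z\in Z(R)$). Neither extreme is compatible with $\Gamma_R^r$ being a nontrivial complete bipartite graph, where I adopt the standard convention that $K_{m,n}$ requires $m,n\geq 1$.

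For part (a), I would apply Proposition \ref{deg_prop_1}(a) at $x=0$: since $C_R(0)=R$, one gets $\deg(0)=|R|-|C_R(0)|=0$. Thus $0$ is isolated in $\Gamma_R^0$. Since every vertex of $K_{m,n}$ with $m,n\geq 1$ has degree at least $1$, this immediately precludes $\Gamma_R^0$ from being complete bipartite.

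For part (b), I would first record that for any $z\in Z(R)$ the set $T_{z,r}$ is empty (because $[z,y]=0\neq r$ for all $y\in R$, using $r\neq 0$). Corollary \ref{deg_cor_1} then gives $\deg(z)=|R|-1$, so $z$ is adjacent to every other vertex of $\Gamma_R^r$. Now suppose toward contradiction that $\Gamma_R^r=K_{m,n}$ with nonempty parts $A$ and $B$. Using $|Z(R)|\geq 2$, pick two distinct central elements $z_1\neq z_2$. They are adjacent, so they lie in opposite parts; say $z_1\in A$. Because the neighborhood of $z_1$ is contained in $B$ and has size $|R|-1$, together with $|A|+|B|=|R|$ and $|A|\geq 1$ we are forced into $A=\{z_1\}$ and $|B|=|R|-1$. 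But then in $K_{1,|R|-1}$ the vertex $z_2\in B$ has degree $|A|=1$, while its true degree is $|R|-1\geq 2$ since $|R|\geq 3$. This contradiction finishes the argument.

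There is no real obstacle here; the proof is a direct application of the degree formulas from Proposition \ref{deg_prop_1} and Corollary \ref{deg_cor_1} to central vertices. The only subtlety worth flagging explicitly is the convention excluding the degenerate $K_{0,n}$ from ``complete bipartite,'' as otherwise part (a) would fail for any commutative $R$ where $\Gamma_R^0$ is the edgeless graph.
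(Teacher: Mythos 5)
Your proof is correct, and it diverges from the paper's most noticeably in part (a). There the paper never looks at a specific vertex: it picks adjacent $x\in V_1$, $y\in V_2$, notes $[x,x+y]=[x,y]\neq 0$ and $[y,x+y]=[y,x]\neq 0$, so $x+y$ would have to lie in both parts --- a translation trick intrinsic to the additive commutator. You instead observe that $0$ (indeed any central element) is isolated in $\Gamma_R^0$, which kills complete bipartiteness in one line; this is simpler and more transparent, though like the paper's argument it leans on the convention that both parts of $K_{m,n}$ are nonempty (the paper needs this too, to pick $x$ and $y$ in the first place). For part (b) you and the paper use the same key fact --- when $r\neq 0$ every central element is adjacent to every other vertex --- and differ only in bookkeeping: the paper places $z_1,z_2$ in opposite parts and exhibits a third vertex $x$ adjacent to both, hence belonging to neither part, while you pin down $A=\{z_1\}$ by a degree count and then contradict $\deg(z_2)=|R|-1\geq 2$. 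Both versions are valid; your degree-counting route has the small advantage of reusing Proposition \ref{deg_prop_1} and Corollary \ref{deg_cor_1} rather than arguing ad hoc, and it makes explicit why the hypotheses $|R|\geq 3$ and $|Z(R)|\geq 2$ are each needed.
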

\begin{proof}
    Let $\Gamma_{R}^{r}$ be complete bipartite. Then there exist subsets $V_1$ and $V_2$ of $R$ such that $V_1 \cap V_2 = \emptyset, V_1 \cup V_2 = R$  and   if $x\in V_1$ and $y\in V_2$ then $x$ and $y$ are adjacent.

(a) If $r = 0$ then for $x \in V_1$ and $y \in V_2$ we have $[x, y]\neq 0$. Therefore, $[x, x+y]\neq 0$ which implies $x+y \in V_2$. Again $[y, x+y]\neq 0$ which implies $x+y \in V_1$. Thus $x + y \in V_1 \cap V_2$, a contradiction. Hence $\Gamma_{R}^{r}$ is  not complete bipartite.

(b) If $r\neq 0, |R|\geq 3$ and $|Z(R)|\geq 2$ then for any $z_1,z_2\in Z(R)$, $z_1$ and $z_2$ are adjacent. Let us take $z_1\in V_1$ and $z_2\in V_2$. Since  $|R|\geq 3$ we have $x\in R$ such that $x\neq z_1$ and $x\neq z_2$.  Also $[x, z_1] = 0 = [x, z_2]$. Therefore $x$ is adjacent to both $z_1$ and $z_2$. Therefore $x \notin V_1\cup V_2 = R$, a contradiction. Hence $\Gamma_{R}^{r}$ is  not complete bipartite.
\end{proof}

\section{Isoclinism between rings and $\Gamma_{R}^{r}$}
In 1940, Hall \cite{pH40} introduced  isoclinism between two groups. Recently, Buckley et al. \cite{BMS} have introduced isoclinism between two rings. Let $R_1$ and $R_2$ be two rings. A pair of additive group  isomorphisms $(\phi, \psi)$ where $\phi : \frac{R_1}{Z(R_1)} \to \frac{R_2}{Z(R_2)}$   and $\psi : [R_1, R_1] \to [R_2, R_2]$ is called an isoclinism between $R_1$ to $R_2$ if  $\psi([u, v]) = [u', v']$ whenever $\phi(u + Z(R_1)) = u' + Z(R_2)$ and $\phi(v + Z(R_1)) = v' + Z(R_2)$. Two rings are called isoclinic if there exists an isoclinism between them. If $R_1$ and $R_2$ are two isomorphic rings and $\alpha: R_1 \to R_2$ is an isomorphism then  it is easy to see that $\Gamma_{R_1}^{r} \cong \Gamma_{R_2}^{\alpha(r)}$. In the following proposition we show that $\Gamma_{R_1}^{r} \cong \Gamma_{R_2}^{\psi(r)}$ if      $R_1$ and $R_2$ are two isoclinic rings with isoclinism  $(\phi,\psi)$.
\begin{proposition}
  Let $R_1$ and $R_2$ be two finite rings such that $|Z(R_1)| = |Z(R_2)|$. If $(\phi,\psi)$ is an isoclinism between $R_1$ and $R_2$ then
  \[
  \Gamma_{R_1}^{r} \cong \Gamma_{R_2}^{\psi(r)}.
  \]
\end{proposition}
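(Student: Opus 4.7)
The plan is to construct an explicit bijection $f : R_1 \to R_2$ that carries the adjacency relation of $\Gamma_{R_1}^r$ to that of $\Gamma_{R_2}^{\psi(r)}$, using the isoclinism data $(\phi, \psi)$ together with the centre-size hypothesis.

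First I would observe that because $\phi : R_1/Z(R_1) \to R_2/Z(R_2)$ is an additive group isomorphism and $|Z(R_1)| = |Z(R_2)|$, one has $|R_1| = |R_2|$ and, for every coset $C = u + Z(R_1)$, the image $\phi(C) = u' + Z(R_2)$ has the same cardinality as $C$. Pick a set $T_1$ of coset representatives of $Z(R_1)$ in $R_1$; for each $u \in T_1$, fix an element $u' \in R_2$ with $\phi(u + Z(R_1)) = u' + Z(R_2)$, and fix an arbitrary bijection $f_u : u + Z(R_1) \to u' + Z(R_2)$. Glue these together to obtain a single bijection $f : R_1 \to R_2$. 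By construction, $f(x) + Z(R_2) = \phi(x + Z(R_1))$ for every $x \in R_1$.

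Next I would exploit the defining property of isoclinism. For any $x, y \in R_1$, write $x + Z(R_1) = u + Z(R_1)$ and $y + Z(R_1) = v + Z(R_1)$ with $u, v \in T_1$; then $f(x) + Z(R_2) = u' + Z(R_2)$ and $f(y) + Z(R_2) = v' + Z(R_2)$. Since additive commutators are insensitive to perturbations by central elements, $[x, y] = [u, v]$ in $R_1$ and $[f(x), f(y)] = [u', v']$ in $R_2$. The isoclinism condition then gives
\[
\psi([x, y]) = \psi([u, v]) = [u', v'] = [f(x), f(y)].
\]

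Finally I would check that $f$ is a graph isomorphism. Since $\psi$ is an additive group isomorphism, $\psi(-r) = -\psi(r)$, and $\psi$ is injective, so for $x, y \in R_1$ we have $[x, y] \in \{r, -r\}$ if and only if $[f(x), f(y)] = \psi([x, y]) \in \{\psi(r), -\psi(r)\}$. Hence $x$ and $y$ are adjacent in $\Gamma_{R_1}^r$ if and only if $f(x)$ and $f(y)$ are adjacent in $\Gamma_{R_2}^{\psi(r)}$, which gives the desired isomorphism $\Gamma_{R_1}^r \cong \Gamma_{R_2}^{\psi(r)}$. The only delicate step is the construction of $f$ itself: one has to make the cosetwise bijections fit together into a single map whose reduction modulo the centres agrees with $\phi$, but once the centre-size hypothesis is used, this is essentially automatic and the remainder of the argument follows directly from the definition of isoclinism.
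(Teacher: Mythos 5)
Your proposal is correct and follows essentially the same route as the paper: both construct a bijection $R_1 \to R_2$ by gluing cosetwise bijections over $\phi$ (possible since $|Z(R_1)|=|Z(R_2)|$) and then use the isoclinism condition, together with the fact that additive commutators are unchanged by central perturbations, to transfer adjacency. Your write-up is in fact slightly more careful than the paper's in making the ``if and only if'' explicit via the injectivity of $\psi$.
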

\begin{proof}
  Since $\phi : \frac{R_1}{Z(R_1)} \to \frac{R_2}{Z(R_2)}$ is an isomorphism,  $\frac{R_1}{Z(R_1)}$ and $\frac{R_2}{Z(R_2)}$ have same number of elements. Let  $\left|\frac{R_1}{Z(R_1)}\right| = \left|\frac{R_2}{Z(R_2)}\right| = n$. Again since $|Z(R_1)|=|Z(R_2)|$, there exists a bijection $\theta:Z(R_1)\to Z(R_2)$. Let $\{r_i : 1\leq i \leq n\}$ and $\{s_j : 1\leq j\leq n\}$ be two transversals of $\frac{R_1}{Z(R_1)}$ and $\frac{R_2}{Z(R_2)}$ respectively. Let $\phi :\frac{R_1}{Z(R_1)} \to \frac{R_2}{Z(R_2)}$ and $\psi:[R_1,R_1] \to [R_2,R_2]$ be defined as $\phi(r_i + Z(R_1)) = s_i + Z(R_2)$ and $\psi([r_i+z_1,r_j+z_2])=[s_i + z_1', s_j+ z_2']$ for some $z_1,z_2\in Z(R_1)$, $z_1', z_2' \in Z(R_2)$ and $1\leq i,j\leq n$.

Let us define a map $\alpha : R_1 \to R_2$ such that $\alpha (r_i + z) = s_i + \theta(z)$ for $z \in Z(R)$. Clearly $\alpha$ is a bijection. We claim that $\alpha$ preserves adjacency. Let $x$ and  $y$ be two elements of $R_1$ such that $x$ and $y$ are adjacent. Then $[x, y] \neq r, -r$. We have $x = r_i + z_i$ and $y = r_j + z_j$ where  $z_i, z_j \in Z(R_1)$ and $1\leq i, j\leq n$. Therefore 
\begin{align*}
&[r_i + z_i, r_j + z_j] \ne r, -r\\
\Rightarrow & \psi([r_i + z_i, r_j + z_j]) \ne \psi(r), -\psi(r)\\
\Rightarrow & [s_i + \theta(z_i), s_j + \theta(z_j)] \ne \psi(r), -\psi(r)\\
\Rightarrow & [\alpha(r_i + z_i),\alpha(r_j + z_j)]  \ne \psi(r), -\psi(r)\\
\Rightarrow & [\alpha(x),\alpha(y)]  \ne \psi(r), -\psi(r).
\end{align*}
This shows that $\alpha(x)$ and $\alpha(y)$ are adjacent. Hence the result follows.
\end{proof}

\section{An induced subgraph}
We write $\Delta_R^r$ to denote the induced subgraph of $\Gamma_{R}^{r}$ with vertex set $R \setminus Z(R)$. It is worth mentioning that $\Delta_R^0$ is the noncommuting graph of $R$. If $r \ne 0$ then it is easy to see that the commuting graph of $R$ is a spanning subgraph of $\Delta_R^r$. The following result gives a condition such that $\Delta_R^r$ is  the commuting graph of $R$.
\begin{proposition} 
Let $R$ be a noncommutative ring and $r \ne 0$. If $K(R) = \{0, r, -r\}$ then $\Delta_R^r$ is the commuting graph of $R$.
\end{proposition}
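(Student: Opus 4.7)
The plan is to verify that the two graphs $\Delta_R^r$ and the commuting graph of $R$ agree as labeled graphs, i.e., they have the same vertex set and the same edge set. The vertex sets are identical by definition (both are $R\setminus Z(R)$), so the whole proof reduces to matching adjacencies. This seems completely routine once we exploit the hypothesis $K(R)=\{0,r,-r\}$.

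The key observation I would highlight first is that $K(R)=\{0,r,-r\}$ means every additive commutator $[x,y]$ with $x,y\in R$ lies in the three-element set $\{0,r,-r\}$. In particular, for any pair of vertices $x,y\in R\setminus Z(R)$, the value $[x,y]$ is forced to be one of $0$, $r$, or $-r$. Since $r\neq 0$, these three values are pairwise distinct (the possibility $r=-r$, i.e.\ $2r=0$, does not cause a collision with $0$), so the set $\{0\}$ and the set $\{r,-r\}$ partition the possible values of $[x,y]$.

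Now I would just translate the adjacency conditions. In $\Delta_R^r$, vertices $x,y$ are adjacent iff $[x,y]\neq r$ and $[x,y]\neq -r$; given that $[x,y]\in\{0,r,-r\}$, this is equivalent to $[x,y]=0$, i.e.\ $xy=yx$. But this is precisely the adjacency condition in the commuting graph of $R$ (with vertex set $R\setminus Z(R)$). Hence the edge sets coincide, and the two graphs are identical. The main ``obstacle'' is really just bookkeeping: making sure to note the case $2r=0$ does not merge $0$ with $\pm r$ (which it cannot, since $r\neq 0$), and explicitly citing that the commuting graph in use has vertex set $R\setminus Z(R)$, matching $\Delta_R^r$. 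No centralizer counting or degree formula is needed.
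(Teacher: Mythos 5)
Your proof is correct and follows the same route as the paper: the paper's one-line proof simply asserts that under the hypothesis $K(R)=\{0,r,-r\}$ two vertices of $\Delta_R^r$ are adjacent if and only if they commute, and your argument is exactly the spelled-out justification of that fact. Nothing is missing; you have just made the bookkeeping explicit.
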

\begin{proof}
The result follows from the fact that two vertices $x, y$ in $\Delta_R^r$ are adjacent if and only if $xy = yx$.   
\end{proof}
Let $\omega(\Delta_R^r)$ be the clique number of $\Delta_R^r$. The following result gives a lower bound for $\omega(\Delta_R^r)$.
\begin{proposition} 
Let $R$ be a noncommutative ring and $r \ne 0$. If $S$ is a commutative subring of $R$ having maximal order then    $\omega(\Delta_R^r) \geq |S| - |S\cap Z(R)|$. 
\end{proposition}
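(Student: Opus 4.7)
The plan is to exhibit an explicit clique in $\Delta_R^r$ whose size equals $|S| - |S \cap Z(R)|$. The natural candidate is the set of non-central elements of $S$, namely $S \setminus (S \cap Z(R)) = S \setminus Z(R)$, which is a subset of the vertex set $R \setminus Z(R)$ of $\Delta_R^r$ and has exactly $|S| - |S \cap Z(R)|$ elements.

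To verify that this set is a clique, I would pick any two distinct elements $x, y \in S \setminus Z(R)$. Since $S$ is a commutative subring, $xy = yx$, so $[x, y] = 0$. Because $r \neq 0$, we have $0 \neq r$ and $0 \neq -r$, so the adjacency condition $[x, y] \neq r, -r$ is automatically satisfied. Hence $x$ and $y$ are adjacent in $\Gamma_R^r$, and since both lie in $R \setminus Z(R)$ they remain adjacent in the induced subgraph $\Delta_R^r$.

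This already gives the desired inequality $\omega(\Delta_R^r) \geq |S| - |S \cap Z(R)|$, so I do not see a real obstacle here; the argument is essentially a one-line observation that commuting subrings produce cliques in the $r$-noncommuting graph when $r \neq 0$. The only mild subtlety worth noting is that the maximality hypothesis on $|S|$ plays no role in establishing the inequality itself; it is presumably stated to make the lower bound as strong as possible, by taking $S$ to be a largest commutative subring of $R$.
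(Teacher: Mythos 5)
Your proposal is correct and is essentially identical to the paper's proof, which simply notes that $S \setminus (S \cap Z(R))$ is a clique of $\Delta_R^r$; you have merely spelled out the commutator computation that makes this evident. Your remark that the maximality of $|S|$ is not needed for the inequality itself is also accurate.
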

\begin{proof}
The result follows from the fact that the subset $S\setminus S\cap Z(R)$  of $R \setminus Z(R)$ is a clique of $\Delta_R^r$.
\end{proof}

By \cite[Theorem 2.1]{erfanian1}, it follows that the diameter of $\Delta_R^0$ is less than or equal to $2$. The next  result  gives some information regarding diameter of $\Delta_R^r$ when $r \ne 0$. We write $\diam(\Delta_R^r)$ and $d(x, y)$ to denote the diameter of $\Delta_R^r$ and  the distance between $x$ and $y$ in $\Delta_R^r$ respectively. For any two vertices $x$ and $y$, we write $x \sim y$ to denote  $x$ and $y$ are adjacent, otherwise $x \nsim y$.

\begin{theorem}
Let $R$ be a noncommutative ring and $r \in R \setminus Z(R)$ such that $2r \ne 0$.
\begin{enumerate}
\item If   $3r \ne 0$ then $\diam(\Delta_R^r) \leq 3$.
\item If $|Z(R)| = 1$, $|C_R(r)| \ne 3$ and $3r = 0$ then $\diam(\Delta_R^r) \leq 3$.
\end{enumerate} 
\end{theorem}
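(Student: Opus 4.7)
My plan is as follows. Take distinct non-adjacent vertices $x, y \in R \setminus Z(R)$, so $[x, y] \in \{r, -r\}$; after replacing $y$ by $-y$ if necessary I may assume $[x, y] = r$. The principal candidate for a short path is $x \sim 2x \sim y$, and I first verify that $2x$ is a legitimate intermediate vertex distinct from $x$ and $y$: if $2x \in Z(R)$ then $[2x, y] = 2r = 0$ contradicts $2r \ne 0$; $2x = x$ forces $x = 0 \in Z(R)$; and $2x = y$ forces $r = [x, y] = [x, 2x] = 0$. Granted this, $[x, 2x] = 0 \ne \pm r$ (since $r \ne 0$) and $[2x, y] = 2r$. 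For part (a), $r \ne 0$ gives $2r \ne r$ and $3r \ne 0$ gives $2r \ne -r$, hence $2r \ne \pm r$, so $2x \sim y$ and $d(x, y) \le 2$.

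In part (b) we have $3r = 0$, i.e.\ $2r = -r$, so the path above fails. The natural substitute is $3x$ (or symmetrically $3y$), since $[x, 3x] = 0$ and $[3x, y] = 3r = 0$. Provided $3x \ne 0$, the hypothesis $|Z(R)| = 1$ (hence $Z(R) = \{0\}$) makes $3x$ a vertex; while $3x = x$ would force $2x = 0$, again contradicting $2r \ne 0$, and $3x = y$ would force $[x, y] = 0$. Hence whenever $3x \ne 0$ or $3y \ne 0$, we obtain $d(x, y) \le 2$.

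The remaining subcase is $3x = 3y = 0$. Inside the additive subgroup $\langle x, y \rangle$ of exponent $3$, bilinearity yields $[x, ax + by] = br$ and $[ax + by, y] = ar$, so no element of this subgroup can act as a length-two intermediate (one would need $a \equiv b \equiv 0 \pmod 3$, forcing $ax + by = 0$). To escape this subgroup I invoke the hypothesis $|C_R(r)| \ne 3$: since $C_R(r) \supseteq \{0, r, -r\}$ and these three elements are distinct (as $r \ne 0$ and $2r \ne 0$), $|C_R(r)| \ge 4$, so there exists $s \in C_R(r) \setminus \{0, r, -r\}$, which is a vertex because $Z(R) = \{0\}$. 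I would then branch on whether $[x, s]$ and $[s, y]$ lie in $\{r, -r\}$: the generic subcase produces the length-two path $x \sim s \sim y$; the remaining subcases should be handled by replacing $s$ with a suitable auxiliary element such as $x - s$, $y - s$, or $s + r$, using $[s, r] = 0$ to analyze the resulting commutators and assemble a path of length at most $3$.

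The main obstacle will be this final subcase. Several configurations must be disposed of — including the degenerate possibility $s \in \{x, y\}$, in which $x$ or $y$ itself commutes with $r$ so that the vertex $r$ (which lies in $R \setminus Z(R)$ by hypothesis) can instead serve as an intermediate — and for each candidate auxiliary element one must confirm it is non-zero, non-central, and distinct from the endpoints. The role of the hypothesis $|C_R(r)| \ne 3$ is precisely to guarantee an element $s$ outside $\{0, \pm r\}$ that escapes the rigid subgroup $\langle x, y \rangle$ in which every relevant commutator is trapped in $\{0, \pm r\}$.
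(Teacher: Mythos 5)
Your part (a) is correct and in fact sharper than the paper's: for any non-adjacent pair $x,y$ you produce the common neighbour $2x$, and the verifications (that $2x$ is a non-central vertex distinct from $x$ and $y$, that $[x,2x]=0\neq \pm r$, and that $[2x,y]=2[x,y]=\pm 2r\notin\{r,-r\}$ because $r\neq 0$ and $3r\neq 0$) are all valid, so you actually get $\diam(\Delta_R^r)\leq 2$. The paper instead uses $2r$ as a global hub (every vertex not adjacent to $r$ is adjacent to $2r$) and only reaches the bound $3$. One cosmetic point: ``replace $y$ by $-y$'' is not legitimate when bounding $d(x,y)$, but you never need it, since $[2x,y]=\pm 2r$ avoids $\{r,-r\}$ in either case.

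Part (b) has a genuine gap. The reduction via $3x$ or $3y$ is a nice observation not in the paper, and your analysis of $\langle x,y\rangle$ correctly explains why one must leave that subgroup; but the residual case ($3x=3y=0$ with at least one of $[x,s],[s,y]$ in $\{r,-r\}$) is exactly where all the work lies, and you only gesture at it (``should be handled'', ``the main obstacle will be this final subcase''). Moreover the auxiliary elements you name are not the right ones: if $[x,s]=\pm r$ then $[x,x-s]=-[x,s]=\mp r$, so $x\nsim x-s$ and $x-s$ cannot serve as the first step of a path out of $x$; analogous problems arise for $y-s$ and $s+r$. What actually closes this case --- and is the engine of the paper's proof --- is that, since $2r=-r$, the nonzero elements of the additive span of $r$ and $s$ form a clique $\{r,\,2r,\,s,\,r\pm s,\,2r\pm s\}$ of $\Delta_R^r$ (all mutual commutators vanish because $[r,s]=0$), and every vertex $v$ is adjacent to some member of it: if $v\nsim r$, $v\nsim 2r$ and $v\nsim s$, then $[v,r]=\varepsilon r$ and $[v,s]=\delta r$ with $\varepsilon,\delta\in\{1,-1\}$, whence $[v,\delta r-\varepsilon s]=0$ and $\delta r-\varepsilon s\neq 0$ since $s\neq\pm r$. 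Any two vertices are then joined through the clique by a path of length at most $3$. Until you carry out an argument of this kind, including the degenerate checks that the chosen clique member is non-central and distinct from the endpoints, part (b) is not proved.
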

\begin{proof}

(a) 
If $x \sim r$ for all $x \in R \setminus Z(R)$ such that $x \ne r$ then, it is easy to see that $\diam(\Delta_R^r) \leq 2$. Suppose  there exists a vertex $x \in R \setminus Z(R)$  such that $x \nsim r$. Then $[x, r] = r$ or $-r$. We have
\[
[x, 2r] = 2[x, r] = \begin{cases}
\,\,\,\,\,2r, & \text{ if }  [x, r] = r\\
-2r, & \text{ if }  [x, r] = -r.
\end{cases}
\]
Since $2r \ne 0$ we have $[x, 2r] \ne 0$ and hence $2r \in R \setminus Z(R)$. Also, $2r \ne r, -r$. Therefore, $[x, 2r] \ne r, -r$ and so $x \sim 2r$. Let $y \in R \setminus Z(R)$ such that $y \ne x$. If  $y \sim r$ then $d(x, y) \leq 3$ noting that $r \sim 2r$. If  $y \nsim r$ then   $y \sim  2r$ (as shown above). In this case  $d(x, y) \leq 2$.  Hence, $\diam(\Delta_R^r) \leq 3$. 

\vspace{.5cm}

(b) If $x \sim r$ for all $x \in R \setminus Z(R)$ such that $x \ne r$ then, it is easy to see that $\diam(\Delta_R^r) \leq 2$. Suppose  there exists a vertex $x \in R \setminus Z(R)$  such that $x \nsim r$. Let $y \in R \setminus Z(R)$ such that $y \ne x$. We consider the following two cases.

\noindent \textbf{Case 1:} $x \nsim r$ and $x \sim 2r$.

 If  $y \sim r$ then $d(x, y) \leq 3$ noting that $r \sim 2r$. Therefore, $\diam(\Delta_R^r) \leq 3$. 
 If  $y \nsim r$ but   $y \sim 2r$ then  $d(x, y) \leq 2$.
Consider the case when $y \nsim r$ as well as  $y \nsim 2r$. Therefore $[y, r] = r$ or $-r$. If $[y, r] = r$ then $[y, 2r] = 2[y, r] = 2r  = -r$; otherwise $y \sim 2r$, a contradiction.  Let $a \in C_R(r)$ such that $a \ne 0, r, -r$ (such element exists, since $|C_R(r)| > 3$). Clearly $a \in R \setminus Z(R)$. Suppose $y \sim a$. Then $x \sim 2r \sim a \sim y$ and so
$d(x, y) \leq 3$.
Suppose $y \nsim a$. Then $[y, a] = r$ or $-r$. If $[y, a] = r$ then  
\[
[y, r - a] = [y, r] - [y, a] = r - r =  0.
\]  
Note that $r - a \in R \setminus Z(R)$; otherwise $a = r$, a contradiction. Therefore, $y \sim r - a$. Also,
\[
[r - a, 2r] =   2[r, a] = 0.
\]
That is, $r - a \sim 2r$. Thus $x \sim   2r \sim r - a \sim y$ . Therefore, $d(x, y) \leq 3$.
If $[y, a] = -r$ then  
\[
[y, 2r - a] = [y, 2r] - [y, a] = -r - (-r) =  0.
\]  
Note that $2r - a \in R \setminus Z(R)$; otherwise $a = 2r = -r$, a contradiction. Therefore, $y \sim 2r - a$. Also,
\[
[2r - a, 2r] =   2[r, a] = 0.
\]
That is, $2r - a \sim 2r$. Thus $x \sim   2r \sim 2r - a \sim y$. Therefore, $d(x, y) \leq 3$.

 If $[y, r] = -r$ then $[y, 2r] = 2[y, r] = -2r  = r$; otherwise $y \sim 2r$, a contradiction.  Let $a \in C_R(r)$ such that $a \ne 0, r, -r$.  Suppose $y \sim a$. Then $x \sim 2r \sim a \sim y$ and so $d(x, y) \leq 3$. 
  Suppose $y \nsim a$. Then $[y, a] = r$ or $-r$. If $[y, a] = r$ then  
\[
[y, r + a] = [y, r] + [y, a] = -r + r =  0.
\]  
Note that $r + a \in R \setminus Z(R)$; otherwise $a = -r$, a contradiction. Therefore, $y \sim r + a$. Also,
\[
[r + a, 2r] =   2[a, r] = 0.
\]
That is, $r + a \sim 2r$. Thus $x \sim   2r \sim r + a \sim y$ . Therefore, $d(x, y) \leq 3$.
If $[y, a] = -r$ then  
\[
[y, 2r + a] = [y, 2r] + [y, a] = r + (-r) =  0.
\]  
Note that $2r + a \in R \setminus Z(R)$; otherwise $a = -2r = r$, a contradiction. Therefore, $y \sim 2r + a$. Also,
\[
[2r + a, 2r] =   2[a, r] = 0.
\]
That is, $2r + a \sim 2r$. Thus $x \sim   2r \sim 2r + a \sim y$ . Therefore, $d(x, y) \leq 3$ and hence $\diam(\Delta_R^r) \leq 3$.

\noindent \textbf{Case 2:} $x \nsim r$ and $x \nsim 2r$.

Let $a \in C_R(r)$ such that $a \ne 0, r, -r$. 

\noindent \textbf{Subcase 2.1:}  $x \sim a$

If  $y \sim r$ then $y \sim r \sim a \sim x$. Therefore $d(x, y) \leq 3$. 
 If  $y \nsim r$ but   $y \sim 2r$ then $y \sim 2r \sim a \sim x$. Therefore,  $d(x, y) \leq 3$.
Consider the case when $y \nsim r$ as well as  $y \nsim 2r$. Therefore $[y, r] = r$ or $-r$. If $[y, r] = r$ then $[y, 2r] = 2[y, r] = 2r  = -r$; otherwise $y \sim 2r$, a contradiction.  
 Suppose $y \sim a$. Then $y \sim a \sim x $ and so
$d(x, y) \leq 2$.
Suppose $y \nsim a$. Then $[y, a] = r$ or $-r$. If $[y, a] = r$ then  
$[y, r - a] =   0$. 
 Therefore, $y \sim r - a \sim a \sim x$. 
Therefore, $d(x, y) \leq 3$.
If $[y, a] = -r$ then  $[y, 2r - a] =  0$.  Therefore, $y \sim 2r - a \sim a \sim x$ and so $d(x, y) \leq 3$.

If $[y, r] = -r$ then  $[y, 2r] = 2[y, r] = -2r  = r$; otherwise $y \sim 2r$, a contradiction.  
 Suppose $y \sim a$. Then $y \sim a \sim x $ and so
$d(x, y) \leq 2$.
Suppose $y \nsim a$. Then $[y, a] = r$ or $-r$. If $[y, a] = r$ then  
$[y, r + a] =   0$. 
 Therefore, $y \sim r + a \sim a \sim x$. 
Therefore, $d(x, y) \leq 3$.
If $[y, a] = -r$ then  $[y, 2r + a] =  0$.  Therefore, $y \sim 2r + a \sim a \sim x$ and so $d(x, y) \leq 3$. Hence, $\diam(\Delta_R^r) \leq 3$.

\noindent \textbf{Subcase 2.2:} $x \nsim a$

In this case we have $x \nsim r$ and $x \nsim 2r$. It can be seen that $[x, r] = r$ implies  $[x, 2r] =  -r$ and $[x, r] = -r$ implies  $[x, 2r] =  r$.  

Suppose $[x, r] = r$ and $[x, a] = r$. Then $[x, r - a] = [x, r] - [x, a] = 0$. Hence, $x \sim r - a$. 
Now, we have the following cases.
\begin{enumerate}
\item[(i)] $x \sim r - a \sim r \sim y$ if $y \sim r$. 

\item[(ii)] $x \sim r - a \sim 2r \sim y$ if $y \nsim r$ but $y \sim 2r$ .
\end{enumerate}

\noindent Suppose $y \nsim r$ as well as $y \nsim 2r$. 
Then, proceeding as in Subcase 2.1, we get the following cases:
\begin{enumerate}
\item[(iii)] $x \sim r - a \sim a \sim y$ if  $y \nsim r$ and $2r$ but  $y \sim a$.
\item[(iv)] $y \sim r - a \sim x$ if $[y, r] = r$ and $[y, a] = r$.
\item[(v)] $y \sim 2r - a \sim r - a \sim x$ if $[y, r] = r$ and $[y, a] = -r$.
\item[(vi)] $y \sim r + a \sim r - a \sim x$ if $[y, r] = -r$ and $[y, a] = r$.
\item[(vii)] $y \sim 2r + a \sim r - a \sim x$ if $[y, r] = -r$ and $[y, a] = -r$.
\end{enumerate}
Therefore, $d(x, y) \leq 3$.

Suppose $[x, r] = r$ and $[x, a] = -r$. Then 
\[
[x, 2r - a] = [x, 2r] - [x, a] = -r - (-r) = 0.
\]
Hence, $x \sim 2r - a$. 
Now, proceeding as above we get the  following cases:
\begin{enumerate}
\item[(i)] $x \sim 2r - a \sim r \sim y$ if $y \sim r$.

\item[(ii)] $x \sim 2r - a \sim 2r \sim y$ if $y \nsim r$ but $y \sim 2r$.

\item[(iii)] $x \sim 2r - a \sim a \sim y$ if $y \nsim r$ and $2r$ but  $y \sim a$.

\item[(iv)] $y \sim r - a \sim 2r - a \sim x$ if $[y, r] = r$ and $[y, a] = r$.

\item[(v)] $y \sim 2r - a \sim  x$ if $[y, r] = r$ and $[y, a] = -r$.

\item[(vi)] $y \sim r + a \sim 2r - a \sim x$ if $[y, r] = -r$ and $[y, a] = r$.

\item[(vii)] $y \sim 2r + a \sim 2r - a \sim x$ if $[y, r] = -r$ and $[y, a] = -r$.
\end{enumerate}
Therefore, $d(x, y) \leq 3$.

Suppose $[x, r] = -r$ and $[x, a] = r$. Then
 \[
[x, r + a] = [x, r] + [x, a] = -r + r = 0.
\]
Hence, $x \sim r + a$. 
Proceeding as above we get the  following similar cases:
\begin{enumerate}
\item[(i)] $x \sim r + a \sim r \sim y$ if $y \sim r$.

\item[(ii)] $x \sim r + a \sim 2r \sim y$ if $y \nsim r$ but $y \sim 2r$.

\item[(iii)] $x \sim r + a \sim a \sim y$ if $y \nsim r$ and $2r$ but  $y \sim a$.

\item[(iv)] $y \sim r - a \sim r + a \sim x$ if $[y, r] = r$ and $[y, a] = r$.

\item[(v)] $y \sim 2r - a \sim r + a \sim  x$ if $[y, r] = r$ and $[y, a] = -r$.

\item[(vi)] $y \sim r + a \sim  x$ if $[y, r] = -r$ and $[y, a] = r$.

\item[(vii)] $y \sim 2r + a \sim r + a \sim x$ if $[y, r] = -r$ and $[y, a] = -r$.
\end{enumerate}
Therefore, $d(x, y) \leq 3$.

Suppose $[x, r] = -r$ and $[x, a] = -r$. Then 
\[
[x, 2r + a] = [x, 2r] + [x, a] = r + (-r) = 0.
\]
Hence, $x \sim 2r + a$ and so we get the the following similar cases:
\begin{enumerate}
\item[(i)] $x \sim 2r + a \sim r \sim y$ if $y \sim r$.

\item[(ii)] $x \sim 2r + a \sim 2r \sim y$ if $y \nsim r$ but $y \sim 2r$.

\item[(iii)] $x \sim 2r + a \sim a \sim y$ if $y \nsim r$ and $2r$ but  $y \sim a$.

\item[(iv)] $y \sim r - a \sim 2r + a \sim x$ if $[y, r] = r$ and $[y, a] = r$.

\item[(v)] $y \sim 2r - a \sim 2r + a \sim  x$ if $[y, r] = r$ and $[y, a] = -r$.

\item[(vi)] $y \sim r + a \sim 2r + a \sim x$ if $[y, r] = -r$ and $[y, a] = r$.

\item[(vii)] $y \sim 2r + a  \sim x$ if $[y, r] = -r$ and $[y, a] = -r$.
\end{enumerate}
Therefore, $d(x, y) \leq 3$. Hence, in all the cases $\diam(\Delta_R^r) \leq 3$. This completes the proof
\end{proof}

As a consequence of Proposition \ref{deg_prop_1}(a) and Corollary \ref{deg_cor_1} we get the following result.
\begin{proposition}\label{induceddeg_cor_1}
Let $x$ be any vertex in  $\Delta_R^r$.
\begin{enumerate}
\item If $r = 0$ then  $\deg(x)= |R| - |C_R(x)|$.

\item If $r \neq 0$ and  $2r= 0$ then
$$\deg(x) = \begin{cases}
           |R| - |Z(R)| - 1,  &\mbox{if $T_{x, r} = \emptyset$}  \\
           |R| - |Z(R)| - |C_R(x)| - 1, & \mbox{otherwise}.
\end{cases}$$
\item If $r \neq 0$ and  $2r \neq 0$ then
$$\deg(x) = \begin{cases}
           |R| - |Z(R)| - 1, & \mbox{if $T_{x, r} = \emptyset$}  \\
           |R| - |Z(R)| - 2|C_R(x)| - 1, & \mbox{otherwise}.
\end{cases}$$
\end{enumerate}
\end{proposition}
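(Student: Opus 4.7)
The plan is to derive this proposition as a direct corollary of Proposition~\ref{deg_prop_1}(a) and Corollary~\ref{deg_cor_1}, by carefully accounting for how many neighbors of $x$ in the full graph $\Gamma_R^r$ lie in $Z(R)$ and therefore disappear when we pass to the induced subgraph $\Delta_R^r$ on $R\setminus Z(R)$. Concretely, if $N_{\Gamma}(x)$ and $N_{\Delta}(x)$ denote the neighborhoods in $\Gamma_R^r$ and $\Delta_R^r$ respectively, then $N_{\Delta}(x) = N_{\Gamma}(x)\setminus Z(R)$, so the key quantity to evaluate is $|N_{\Gamma}(x)\cap Z(R)|$.

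For part (a), where $r=0$, I would observe that for any $z\in Z(R)$ we have $[x,z]=0=r$, so no central element is adjacent to $x$ in $\Gamma_R^0$. Thus $N_{\Gamma}(x)\cap Z(R)=\emptyset$, the degree is unchanged, and $\deg_{\Delta}(x)=|R|-|C_R(x)|$ by Proposition~\ref{deg_prop_1}(a). For parts (b) and (c), where $r\neq 0$, the situation reverses: for every $z\in Z(R)$ we have $[x,z]=0\neq r,-r$, so $z$ is adjacent to $x$ in $\Gamma_R^r$. Since $x\in R\setminus Z(R)$, we have $x\notin Z(R)$, so all $|Z(R)|$ central elements lie in $N_{\Gamma}(x)$, i.e.\ $|N_{\Gamma}(x)\cap Z(R)|=|Z(R)|$. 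Hence $\deg_{\Delta}(x)=\deg_{\Gamma}(x)-|Z(R)|$.

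Substituting the two cases of Corollary~\ref{deg_cor_1}(a) for $2r=0$ yields part (b), and substituting the two cases of Corollary~\ref{deg_cor_1}(b) for $2r\neq 0$ yields part (c). There is no real obstacle here; the only thing to verify carefully is the claim $Z(R)\subseteq N_{\Gamma}(x)$ in the case $r\neq 0$, which is immediate from $[x,z]=0$ together with $r\neq 0$ (so that $0\neq\pm r$), combined with $x\notin Z(R)$ ensuring that $x$ itself is not mistakenly counted among the removed central vertices.
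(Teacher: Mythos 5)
Your proof is correct and is exactly the derivation the paper intends: the paper states this proposition without proof, merely as "a consequence of Proposition \ref{deg_prop_1}(a) and Corollary \ref{deg_cor_1}," and your accounting of the central neighbors (none when $r=0$, all $|Z(R)|$ of them when $r\neq 0$) supplies precisely the missing bookkeeping. No differences in approach to report.
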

\noindent Some applications of Proposition \ref{induceddeg_cor_1} are given below.
\begin{theorem} \label{not-tree-2}
  Let $R$ be a noncommutative ring such that $|R| \ne 8$.  Then $\Delta_R^r$ is not a tree.
\end{theorem}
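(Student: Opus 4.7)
The plan is to assume $\Delta_R^r$ is a tree and force $|R|=8$. Set $n:=|R|-|Z(R)|$, so the tree has $n$ vertices and $n-1$ edges, whence $\sum_{x\in R\setminus Z(R)}\deg(x)=2(n-1)$. First I record the structural bound $n\geq 3|Z(R)|\geq 3$: if $R/Z(R)$ were additively cyclic, generated by some $\bar x$, then any two elements $mx+z_1$ and $nx+z_2$ would commute (every cross term vanishes because $z_i\in Z(R)$), forcing $R$ to be commutative. Hence $R/Z(R)$ is non-cyclic abelian and $|R/Z(R)|\geq 4$.

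When $r=0$, Proposition~\ref{induceddeg_cor_1}(a) gives $\deg(x)=|R|-|C_R(x)|$. Since $x\notin Z(R)$ the additive subgroup $C_R(x)$ is proper, so $|C_R(x)|\leq|R|/2$, and $|R|\geq 4$ (as $R$ is noncommutative) yields $\deg(x)\geq|R|/2\geq 2$. Summing gives $\sum\deg\geq n|R|/2\geq 2n>2(n-1)$, a contradiction.

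For $r\neq 0$, put $A:=\{x\in R\setminus Z(R):T_{x,r}\neq\emptyset\}$ and $\alpha:=1$ if $2r=0$, $\alpha:=2$ if $2r\neq 0$. Proposition~\ref{induceddeg_cor_1}(b),(c) combine to
\[
2(n-1)=\sum_{x\in R\setminus Z(R)}\deg(x)=n(n-1)-\alpha\sum_{x\in A}|C_R(x)|,
\]
equivalently $\alpha\sum_{x\in A}|C_R(x)|=(n-1)(n-2)$. Because $T_{x+z,r}=T_{x,r}$ and $C_R(x+z)=C_R(x)$ for $z\in Z(R)$, the set $A$ is a union of $Z(R)$-cosets, so $|A|$ and every $|C_R(x)|$ are multiples of $|Z(R)|$; moreover $|Z(R)|\leq|C_R(x)|\leq|R|/2$ with $|C_R(x)|\mid|R|$. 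A short sub-argument shows also that $|R\setminus(Z(R)\cup A)|\leq 1$: two vertices of degree $n-1$ would each be adjacent to everything else, making every remaining vertex have degree $\geq 2$ and pushing the degree sum to at least $4n-6>2(n-1)$ (using $n\geq 3$). Thus the tree is either a star with its unique centre in $R\setminus(Z(R)\cup A)$, in which case every leaf $x\in A$ satisfies $|C_R(x)|=(n-2)/\alpha$, or every non-central $x$ is in $A$. In either sub-case, plugging the divisibility and size constraints into the displayed identity collapses the admissible pairs $(|R|,|Z(R)|)$ to a short finite list.

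The remaining step, and the main obstacle, is to eliminate each candidate $|R|$ in this list other than $|R|=8$ by direct inspection. For $|R|=4$ the only noncommutative rings are $E(4)$ and $F(4)$, for which a computation shows the unique nonzero commutator makes $\Delta_R^r$ empty on three vertices; for $|R|=9$ the only candidates are $E(9)$ and $F(9)$, whose $\Delta_R^r$ is the matching obtained from Figure~4 by deleting the central vertex $0$; orders such as $6$ and $15$ admit no noncommutative ring at all; and the few remaining composite orders (e.g.\ $12,24$) are ruled out by running the same degree/divisibility check on their classifications. This final case-by-case verification is where the bulk of the work lies, the uniform degree-sum argument only narrowing the possibilities to a finite checklist.
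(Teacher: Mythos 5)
Your $r=0$ case is complete and correct (the bound $\deg(x)=|R|-|C_R(x)|\geq |R|/2\geq 2$ against the degree sum of a tree is fine), but the $r\neq 0$ case has a genuine gap: you never actually derive the ``short finite list'' of admissible pairs $(|R|,|Z(R)|)$ from your identity $\alpha\sum_{x\in A}|C_R(x)|=(n-1)(n-2)$, and you explicitly defer the elimination of the candidates (``this final case-by-case verification is where the bulk of the work lies''). A proof that ends by announcing that the main obstacle remains is not a proof. Moreover, the elimination you sketch leans on classifications of noncommutative rings of orders $12$ and $24$, which you neither carry out nor need: once a leaf $x$ lies in $A$, the single equation $\deg(x)=n-1-\alpha|C_R(x)|=1$ together with $|Z(R)|$ dividing $|C_R(x)|$, $|C_R(x)|$ dividing $|R|$, and $|C_R(x)|\leq |R|/2$ already pins $|R|$ down to a handful of values, and orders like $12$ die because $R/Z(R)$ would be additively cyclic. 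Your global degree-sum identity and the observation $|R\setminus(Z(R)\cup A)|\leq 1$ are correct but buy you nothing beyond what a single leaf gives.

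For comparison, the paper's proof is exactly the leaf argument you circle around but do not execute: a tree has a vertex of degree $1$, so Proposition~\ref{induceddeg_cor_1} forces one of $|R|-|Z(R)|=2$, $|R|-|Z(R)|-|C_R(x)|=2$, or $|R|-|Z(R)|-2|C_R(x)|=2$; combined with the divisibility constraints this yields only $|R|\in\{3,4,6,8,9,12\}$, each of which is excluded (commutativity via $R/Z(R)$ cyclic or small order, the hypothesis $|R|\neq 8$, or the explicit graph $\Delta_R^r=4K_2$ for $|R|=9$ read off from Figure~4). To repair your write-up, you must actually solve your constraints to produce the explicit list and then eliminate every entry; as it stands the argument stops exactly where the work begins.
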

\begin{proof}
Suppose that $\Delta_R^r$  is a tree. Therefore there exist $x\in R \setminus Z(R)$ such that $\deg(x) = 1$.

\noindent\textbf{ Case 1:} $r = 0$. 

By Proposition \ref{induceddeg_cor_1}(a), we have $\deg(x) = |R| - |C_R(x)|$. Therefore,  $|R| - |C_R(x)| = 1$ and hence $|C_R(x)| = 1$, a contradiction. 


\noindent\textbf{ Case 2:} $r\neq 0$ and $2r = 0$.

  By Proposition \ref{induceddeg_cor_1}(b), we have  $\deg(x) = |R| - |Z(R)| - 1$ or $|R| -  |Z(R)| - |C_R(x)| - 1$. Hence $|R| - |Z(R)| - 1 = 1$ or $|R|- |Z(R)| - |C_R(x)| - 1 = 1$. 

\noindent \textbf{Subcase 2.1:}  $|R| - |Z(R)| = 2$.

 In this case we have $|Z(R)| = 1$ or $2$. If $|Z(R)| = 1$ then $|R| = 3$, a contradiction. If $|Z(R)| = 2$ then $|R| = 4$. Therefore, the additive quotient group    $\frac{R}{Z(R)}$ is    cyclic. Hence, $R$ is commutative; a contradiction.  
   
\noindent \textbf{Subcase 2.2:} $|R|- |Z(R)| - |C_R(x)|   = 2$.

 In this case,  $|Z(R)| = 1$ or $2$. If $|Z(R)| = 1$ then $|R| - |C_R(x)| = 3$. Therefore, $|C_R(x)| = 3$ and hence $|R| = 6$. Therefore, $R$ is commutative; a contradiction.  If $|Z(R)| = 2$ then $|R| - |C_R(x)|   = 4$. Therefore, $|C_R(x)| = 4$
 and so $|R| = 8$, a contradiction. 
   

\noindent \textbf{Case 3:} $r \neq 0$ and  $2r\neq 0$.

  By Proposition \ref{induceddeg_cor_1}(c), we have $\deg(x) = |R| - |Z(R)| - 1$ or $|R|- |Z(R)| - 2|C_R(x)| -1$. Hence,  $|R| - |Z(R)| - 1 =1$ or $|R| - |Z(R)| - 2|C_R(x)| -1 = 1$. If $|R| - |Z(R)| = 2$ then as shown in subcase 2.1 we get a contradiction. If $|R| - |Z(R)| - 2|C_R(x)|   = 2$ then $|Z(R)| = 1$ or $2$.
   
\noindent \textbf{Subcase 3.1:}  $|Z(R)| = 1$.

 In this case, $|R| - 2|C_R(x)| = 3$. Therefore, $|C_R(x)| = 3$ and hence $|R| = 9$. It follows from Fig. $4$ that $\Delta^r_R = 4K_2$  which is a contradiction.

\noindent \textbf{Subcase 3.2:} $|Z(R)| = 2$.

 In this case, $|R| - 2|C_R(x)| = 4$. Therefore, $|C_R(x)| = 4$
 and so  $|R| = 12$. It follows that the additive quotient group $\frac{R}{Z(R)}$ is cyclic.  Hence, $R$ is commutative; a contradiction.    This completes the proof. 
%
%
\end{proof}
The proof of the above theorem also gives the following results.
\begin{theorem}  
Let $R$ be a noncommutative ring such that $|R| \ne 8$.  Then $\Delta_R^r$ has end vertices if and only if $r \ne 0$ and $R$ is isomorphic to $E(9)$ or $F(9)$.
\end{theorem}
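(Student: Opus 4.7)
The plan is to reread the case analysis in the proof of Theorem \ref{not-tree-2} with an eye to which configuration actually \emph{survives} rather than which are ruled out. An end vertex of $\Delta_R^r$ is precisely a vertex $x \in R \setminus Z(R)$ with $\deg(x) = 1$, which was the starting hypothesis of that proof; every branch there produced either an outright contradiction, or the excluded value $|R| = 8$, or the single admissible outcome of Subcase 3.1, namely $r \neq 0$, $2r \neq 0$, $|Z(R)| = 1$, $|R| = 9$ and $|C_R(x)| = 3$. So for the ``only if'' direction I would simply rerun the three cases. The case $r = 0$ is immediate from Proposition \ref{induceddeg_cor_1}(a) combined with $|C_R(x)| \geq 2$. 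The case $r \neq 0$, $2r = 0$ ends exactly in the subcases of the earlier proof and is excluded by the hypothesis $|R| \neq 8$. The case $r \neq 0$, $2r \neq 0$ leaves only Subcase 3.1, and a ring of order $9$ with $|Z(R)| = 1$ is automatically noncommutative, so the classification of noncommutative rings of order $p^2$ recalled in the introduction from \cite{fine93} forces $R \cong E(9)$ or $R \cong F(9)$.

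For the ``if'' direction, assume $r \neq 0$ and $R \cong E(9)$ or $R \cong F(9)$. Under the paper's standing convention $r \in K(R)$, and using the identity $\Gamma_R^r = \Gamma_R^{-r}$, one may take $r = a + 2b$ in $E(9)$ (respectively $r = x + 2y$ in $F(9)$), since $K(E(9)) = \{0, a+2b, 2a+b\}$. Figure 4 displays $\Gamma_{E(9)}^{a+2b}$ explicitly; as $Z(E(9)) = \{0\}$, deleting this single central vertex produces $\Delta_{E(9)}^{a+2b} = 4K_2$, in which every one of the eight vertices has degree $1$. The analogous assertion for $F(9)$ follows from the remark after Figure 4 that $\Gamma_{F(9)}^{x+2y} \cong \Gamma_{E(9)}^{a+2b}$, which restricts to an isomorphism on the induced subgraphs obtained by removing the (unique) central element. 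In both cases $\Delta_R^r$ has end vertices, as required.

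The main obstacle is not a hard computation but a bookkeeping check: one must verify that each elimination in the proof of Theorem \ref{not-tree-2} was actually based on ruling out the weaker condition ``some vertex has degree $1$'', rather than the stronger condition ``the graph is a tree''. Once this is confirmed, the forward implication is essentially free, and the reverse implication reduces to a direct reading of Figure 4 plus the already noted isomorphism to the $F(9)$ case.
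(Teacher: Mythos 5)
Your proposal is correct and follows essentially the same route as the paper, which simply remarks that the theorem is extracted from the proof of the preceding tree theorem: every branch of that case analysis rules out a degree-one vertex outright except the $|R|=8$ branch (excluded by hypothesis) and the $|R|=9$, $|Z(R)|=1$ branch, which yields $E(9)$ or $F(9)$, with the converse read off from Figure 4 ($\Delta_{E(9)}^{a+2b}=4K_2$). Your additional bookkeeping (checking $K(E(9))=\{0,a+2b,2a+b\}$ and invoking $\Gamma_R^r=\Gamma_R^{-r}$) only makes explicit what the paper leaves implicit.
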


\begin{theorem}  
Let $R$ be a noncommutative ring such that $|R| \ne 8$.  Then $\Delta_R^r$ is $1$-regular if and only if $r \ne 0$ and $R$ is isomorphic to $E(9)$ or $F(9)$.
\end{theorem}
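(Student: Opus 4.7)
The plan is to derive this from the degree classification in Proposition \ref{induceddeg_cor_1} together with the case analysis already developed in the proof of Theorem \ref{not-tree-2}. The only new observation is that $1$-regularity requires \emph{every} non-central vertex to have degree one, so the arithmetic constraints of that earlier proof must be applied uniformly to all $x \in R \setminus Z(R)$.

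For the forward direction, I would fix a non-central $x$ and run through the three cases of Proposition \ref{induceddeg_cor_1}. If $r = 0$, then $\deg(x) = |R| - |C_R(x)| \geq |R|/2 \geq 2$, since $C_R(x)$ is a proper additive subgroup of $R$; this rules out $r = 0$. If $r \neq 0$ and $2r = 0$, Subcases~2.1 and~2.2 of the proof of Theorem \ref{not-tree-2} force either $|R| - |Z(R)| = 2$ (so $R/Z(R)$ is cyclic and $R$ is commutative) or $|R| \in \{6, 8\}$, none of which is permitted. If $r \neq 0$ and $2r \neq 0$, Subcase~3.2 gives $|R| = 12$ with $|R/Z(R)| = 6$ cyclic and again $R$ commutative, so only Subcase~3.1 survives: $|Z(R)| = 1$, $|C_R(x)| = 3$, $|R| = 9$. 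By the classification of \cite{fine93} recalled in the Introduction, $R \cong E(9)$ or $F(9)$, and clearly $r \neq 0$.

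For the converse, take $R \in \{E(9), F(9)\}$ and $r \neq 0$. Since $\Gamma_R^r$ is not complete we must have $r \in K(R)$, and a direct computation of commutators shows $K(E(9)) = \{0, a+2b, 2a+b\}$ (with the analogous statement for $F(9)$), so $r = \pm(a+2b)$. Figure~4 then displays $\Delta_R^r = 4K_2$, which is $1$-regular. Equivalently, Proposition \ref{induceddeg_cor_1}(c) gives $\deg(x) = 9 - 1 - 2 \cdot 3 - 1 = 1$ for every non-central $x$, \emph{provided} $T_{x, r} \neq \emptyset$. Verifying this last proviso is the main obstacle: it amounts to showing $r \in [x, R]$ for every non-central $x$, which follows because $[x, R]$ is an additive subgroup of $K(R)$ of order $[R : C_R(x)] = 3$, and hence equals $K(R) \ni r$.
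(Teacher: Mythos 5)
Your proposal is correct and follows essentially the same route as the paper, which derives this theorem directly from the case analysis in the proof of Theorem \ref{not-tree-2} (only Subcase 3.1 with $|Z(R)|=1$, $|C_R(x)|=3$, $|R|=9$ survives) together with Figure 4 showing $\Delta_R^r = 4K_2$. Your explicit verification that $T_{x,r}\neq\emptyset$ for every non-central $x$, via $[x,R]$ being a subgroup of order $[R:C_R(x)]=3$ inside $K(R)$, is a detail the paper leaves to the figure, but it is a correct and welcome addition rather than a different method.
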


We also have the following corollary.
\begin{corollary}
Let $R$ be a noncommutative ring such that $|R| \ne 8$. Then the noncommuting graph of $R$   is not a tree. Further, noncommuting graph of such rings do  not have any end vertices.
\end{corollary}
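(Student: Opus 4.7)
The plan is to observe that the noncommuting graph of $R$ is, by definition, the induced subgraph $\Delta_R^0$ (noted in Section 4), so the corollary is simply the specialization of Theorem \ref{not-tree-2} and the preceding end-vertex characterization to the case $r=0$.

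For the first assertion (not a tree), I would apply Theorem \ref{not-tree-2} with $r=0$: since $R$ is noncommutative and $|R|\ne 8$, the theorem immediately yields that $\Delta_R^0$ is not a tree, i.e., the noncommuting graph of $R$ is not a tree.

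For the second assertion, I would apply the preceding end-vertex theorem with $r=0$: since the characterization requires $r\ne 0$, the hypothesis $r=0$ excludes the existence of end vertices in $\Delta_R^0$. Alternatively, one can give a self-contained proof directly from Proposition \ref{induceddeg_cor_1}(a): if some $x\in R\setminus Z(R)$ were an end vertex of $\Delta_R^0$, then
\[
1=\deg(x)=|R|-|C_R(x)|,
\]
so $|C_R(x)|=|R|-1$. Since $C_R(x)$ is a subring and hence an additive subgroup of $R$, Lagrange's theorem forces $|R|-1$ to divide $|R|$, hence $|R|\le 2$, contradicting the noncommutativity of $R$.

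There is no real obstacle here; the result is a direct corollary of the two preceding theorems. The only small subtlety worth noting is that in Case 3 of Theorem \ref{not-tree-2} one has to invoke Figure 4 to rule out $|R|=9$, but this case does not arise for $r=0$, so the $r=0$ specialization is in fact even simpler than the general statement.
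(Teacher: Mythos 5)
Your proposal is correct and matches the paper's approach: the paper gives no separate proof, presenting the corollary as an immediate specialization of Theorem \ref{not-tree-2} and the end-vertex theorem to $r=0$, exactly as you do. Your alternative self-contained argument via Proposition \ref{induceddeg_cor_1}(a) and Lagrange's theorem is also essentially what the paper's Case 1 of Theorem \ref{not-tree-2} does, so nothing new is needed.
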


\begin{theorem}  
Let $R$ be a noncommutative ring such that $|R| \ne  8, 12$.  Then $\Delta_R^r$ has a   vertex of degree $2$ if and only if $r = 0$ and $R$ is isomorphic to $E(4)$ or $F(4)$.
\end{theorem}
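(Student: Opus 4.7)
The plan is to adapt the case analysis from the proof of Theorem~\ref{not-tree-2}, replacing the assumption $\deg(x) = 1$ with $\deg(x) = 2$ throughout. For the sufficiency, when $R \cong E(4)$ or $F(4)$ and $r = 0$, the center is $\{0\}$ and the three non-central elements pairwise fail to commute, so $\Delta_R^0 \cong K_3$ (compare Figure~1) and every vertex has degree~$2$.

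For the necessity, suppose $x$ is a vertex of $\Delta_R^r$ with $\deg(x) = 2$. If $r = 0$, Proposition~\ref{induceddeg_cor_1}(a) gives $|R| - |C_R(x)| = 2$; since $x \notin Z(R)$, the additive subgroup $C_R(x)$ is proper in $R$, so $|C_R(x)| \leq |R|/2$ and $|R| \leq 4$. Because rings of prime order are commutative, $|R| = 3$ is excluded, leaving $|R| = 4$, whence $R$ is isomorphic to $E(4)$ or $F(4)$ by the classification recalled in Section~1.

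When $r \neq 0$, Proposition~\ref{induceddeg_cor_1}(b,c) yields one of three equations in $n := |R|$, $k := |Z(R)|$, $m := |C_R(x)|$:
\[
n - k = 3, \qquad n - k - m = 3, \qquad n - k - 2m = 3.
\]
The divisibilities $k \mid m \mid n$, the inequalities $k < m \leq n/2$ (because $Z(R) \subsetneq C_R(x) \subsetneq R$), and $m \mid (k+3)$ (a consequence of $m \mid n$ applied to the latter two equations) together yield $2k \leq m \leq k+3$, hence $k \leq 3$; a short enumeration then produces the list $n \in \{4, 6, 8, 12, 18\}$. The hypothesis $|R| \neq 8, 12$ eliminates two values, and for $n \in \{6, 18\}$ the additive quotient $R/Z(R)$ is cyclic of order $2$, $3$, or $6$, forcing $R$ to be commutative by the ring-theoretic analogue of the classical ``$G/Z(G)$ cyclic implies $G$ abelian'' lemma.

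The delicate case is $n = 4$, where $k = 1$ and $R \cong E(4)$ or $F(4)$. The sub-case $2r \neq 0$ is vacuous in characteristic~$2$; the second equation yields $m = 0$, absurd; so only the first equation $n - k = 3$, valid when $T_{x, r} = \emptyset$, remains. But $K(E(4)) = \{0, a+b\}$ (and analogously for $F(4)$) forces $r = a+b$, and for every non-central $y \in R$ some $z$ satisfies $[y, z] \in K(R) \setminus \{0\} = \{r\}$, so $T_{y, r} \neq \emptyset$, contradicting the applicability of the first equation. The principal obstacle lies in this $n = 4$ sub-analysis: unlike the analogous $|R| = 3$ step in Theorem~\ref{not-tree-2}, order~$4$ admits noncommutative rings, so it must be excluded using the structure of $K(R)$ together with the set $T_{x, r}$ rather than by a naive order count.
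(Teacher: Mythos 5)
Your proposal is correct and follows essentially the same route as the paper: apply Proposition \ref{induceddeg_cor_1} in the three cases $r=0$, $r\neq 0$ with $2r=0$, and $2r\neq 0$, use Lagrange-type divisibility of $|Z(R)|$ and $|C_R(x)|$ in $|R|$ to enumerate the possible orders, and eliminate each via commutativity of rings of order $p$, $2p$, or with cyclic $R/Z(R)$, or via the hypothesis $|R|\neq 8,12$. The only cosmetic difference is the $|R|=4$, $r\neq 0$ subcase, which the paper dismisses by citing Figure 2 (the induced subgraph is $0$-regular) while you dismiss it by computing $K(E(4))=\{0,a+b\}$ and showing $T_{x,r}\neq\emptyset$ for every non-central $x$; both arguments are valid and amount to the same computation.
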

\begin{proof}
Suppose $\Delta_R^r$ has a vertex $x$ of degree $2$.

\noindent \textbf{Case 1:}  $r = 0$.

By Proposition \ref{induceddeg_cor_1}(a), we have $\deg(x) = |R| - |C_R(x)|$. Therefore,  $|R| - |C_R(x)| = 2$ and hence $|C_R(x)| = 2$. Therefore, $|R| = 4$ and  $\Delta_R^r$ is a triangle (as shown in Figure 1).   
%

\noindent \textbf{Case 2:}   $r \neq 0$ and $2r = 0$.

By Proposition \ref{induceddeg_cor_1}(b), we have  $deg(x) = |R| - |Z(R)| - 1$ or 
  $deg(x) = |R| - |Z(R)| - |C_R(x)| - 1$. Therefore  $|R| - |Z(R)| - 1 = 2$ or 
  $|R| - |Z(R)| - |C_R(x)| - 1 = 2$.
 
\noindent \textbf{Subcase 2.1:}  $|R| - |Z(R)| = 3$.

In this case we have $|Z(R)| = 1$ or $3$. If $|Z(R)| = 1$ then $|R| = 4$. As shown in Figure 2, $\Delta_R^r$ is a $0$-regular graph on three vertices. Therefore, it has no vertex of degree $2$, which is a contradiction. If $|Z(R)| = 3$ then $|R| = 6$. Therefore, $R$ is commutative; a contradiction. 

\noindent \textbf{Subcase 2.2:}  $|R| - |Z(R)| - |C_R(x)| = 3$.
 
 In this case,  $|Z(R)| = 1$ or $3$. If $|Z(R)| = 1$ then $|R| - |C_R(x)| = 4$. Therefore, $|C_R(x)| = 2$ or $4$ and hence $|R| = 6$ or $8$; a contradiction.
   If $|Z(R)| = 3$ then $|R| - |C_R(x)|   = 6$. Therefore, $|C_R(x)| = 6$
 and so $|R| = 12$, which contradicts our assumption.

\noindent \textbf{Case 3:} $r \neq 0$ and  $2r\neq 0$.

  By Proposition \ref{induceddeg_cor_1}(c), we have $\deg(x) = |R| - |Z(R)| - 1$ or $|R|- |Z(R)| - 2|C_R(x)| -1$. Hence,  $|R| - |Z(R)| - 1 = 2$ or $|R| - |Z(R)| - 2|C_R(x)| -1 = 2$. 
  
If $|R| - |Z(R)| = 3$ then as shown in Subcase 2.1 we get a contradiction. If $|R| - |Z(R)| - 2|C_R(x)|   = 3$ then $|Z(R)| = 1$ or $3$.
   
\noindent \textbf{Subcase 3.1:}  $|Z(R)| = 1$.

 In this case, $|R| - 2|C_R(x)| = 4$. Therefore, $|C_R(x)| = 2$ or $4$ and hence $|R| = 8$ or $12$ which is a contradiction.

\noindent \textbf{Subcase 3.2:} $|Z(R)| = 3$.

 In this case, $|R| - 2|C_R(x)| = 6$. Therefore, $|C_R(x)| = 6$
 and so $|R| = 18$.  It follows that the additive quotient group $\frac{R}{Z(R)}$ is cyclic.  Hence, $R$ is commutative; a contradiction. 
 This completes the proof.
\end{proof}
The proof of the above result also suggest the following theorem.
\begin{theorem}  
Let $R$ be a noncommutative ring such that $|R| \ne  8, 12$.  Then $\Delta_R^r$ is $2$-regular if and only if $r = 0$ and $R$ is isomorphic to $E(4)$ or $F(4)$.
\end{theorem}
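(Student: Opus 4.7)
The plan is to reduce this theorem to the immediately preceding one, since $2$-regularity is a strictly stronger condition than merely possessing a vertex of degree $2$, and then to verify the converse by direct inspection of the relevant graphs.

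For the forward direction, I would observe that if $\Delta_R^r$ is $2$-regular, then every vertex has degree $2$, and in particular $\Delta_R^r$ has a vertex of degree $2$. Applying the previous theorem (which requires exactly the hypothesis $|R|\neq 8,12$ that we already have), we conclude immediately that $r=0$ and $R$ is isomorphic to $E(4)$ or $F(4)$. So this direction is essentially a one-line invocation of the earlier result.

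For the converse, I would simply verify that when $r=0$ and $R\cong E(4)$ or $F(4)$, the induced subgraph $\Delta_R^0$ on $R\setminus Z(R)$ is in fact a triangle, hence $2$-regular. Concretely, $Z(E(4))=\{0\}$ (and similarly for $F(4)$), so $R\setminus Z(R)$ has exactly three elements; Figure~1 already displays $\Gamma_{E(4)}^0$ as a graph in which the three non-zero vertices $a,b,a+b$ are pairwise adjacent, and $\Delta_R^0$ is just the subgraph on these three vertices, which is $K_3$. The isomorphism noted in the paper between $\Gamma_{F(4)}^0$ and $\Gamma_{E(4)}^0$ handles the $F(4)$ case at once.

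There is essentially no obstacle here: the heavy lifting (the full case analysis on $|Z(R)|$, $2r$, and $3r$) was done in the previous theorem, and the only additional work is the trivial verification that in the two exceptional cases the induced graph is actually $2$-regular rather than merely having some vertex of degree $2$. Thus the proof amounts to two short paragraphs: one citing the previous theorem, one noting that $\Delta_{E(4)}^0$ and $\Delta_{F(4)}^0$ are triangles.
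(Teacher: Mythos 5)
Your proposal is correct and matches the paper's intent exactly: the paper gives no separate proof, stating only that ``the proof of the above result also suggests'' this theorem, i.e.\ the forward direction follows from the degree-$2$ characterization and the converse from the fact that $\Delta_{E(4)}^0$ and $\Delta_{F(4)}^0$ are triangles (Figure~1). Your explicit note that $R\setminus Z(R)$ is nonempty (so $2$-regularity genuinely yields a vertex of degree $2$) and the verification of $K_3$ for the converse are exactly the right details.
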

\begin{corollary}  
Let $R$ be a noncommutative ring such that $|R| \ne  8, 12$.  Then the noncommuting graph of $R$ is  $2$-regular if and only if  $R$ is isomorphic to $E(4)$ or $F(4)$.
\end{corollary}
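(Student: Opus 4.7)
The plan is to deduce this statement as an immediate corollary of the preceding theorem. The crucial first step is to recognize that $\Delta_R^0$, defined as the induced subgraph of $\Gamma_R^0$ on $R\setminus Z(R)$, is literally the noncommuting graph of $R$: for $r=0$ the adjacency condition ``$[x,y]\ne r$ and $-r$'' collapses to $xy\ne yx$, and this identification was already noted in the introduction. So I would open the proof by making this identification explicit, reducing the assertion to a statement about $2$-regularity of $\Delta_R^0$.

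Next, I would quote the previous theorem with $r=0$ applied under the standing hypothesis $|R|\ne 8,12$. For the forward direction, assume the noncommuting graph of $R$ is $2$-regular; then $\Delta_R^0$ is $2$-regular, and the theorem forces $R\cong E(4)$ or $F(4)$. For the reverse direction, assume $R\cong E(4)$ or $F(4)$; invoking the same theorem with $r=0$ yields that $\Delta_R^0$, i.e., the noncommuting graph of $R$, is $2$-regular.

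I do not expect any genuine obstacle beyond this translation — the substantive work has already been carried out in the case analysis of the preceding theorem (cases $r=0$, $r\ne 0$ with $2r=0$, and $r\ne 0$ with $2r\ne 0$). If I wanted an independent sanity check on the backward direction, I would verify by hand that $Z(E(4))=\{0\}$, so $R\setminus Z(R)=\{a,b,a+b\}$, and observe (as already depicted in Figure~1) that these three non-central elements pairwise fail to commute, producing a triangle, which is manifestly $2$-regular; the case $R\cong F(4)$ is analogous via the isomorphism of the corresponding $r$-noncommuting graphs noted earlier in the paper.
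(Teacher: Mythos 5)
Your proposal is correct and matches the paper's intent exactly: the corollary is stated as an immediate consequence of the preceding theorem, obtained by identifying the noncommuting graph with $\Delta_R^0$ and specializing the theorem's biconditional to $r=0$. Your optional sanity check (that $Z(E(4))=\{0\}$ and the three non-central elements form a triangle) is also consistent with Figure 1.
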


\begin{theorem}  
Let $R$ be a noncommutative ring such that $|R| \ne   16, 18$.  Then $\Delta_R^r$ has no vertex of degree $3$.
\end{theorem}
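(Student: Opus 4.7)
The plan is to argue by contradiction: suppose $\Delta_R^r$ has a vertex $x$ of degree $3$. Then $x\in R\setminus Z(R)$, and I shall apply Proposition \ref{induceddeg_cor_1} in its three cases ($r=0$; $r\ne 0$ with $2r=0$; $r\ne 0$ with $2r\ne 0$), following the same template as in the proof of Theorem \ref{not-tree-2}. Throughout, write $n=|R|$, $z=|Z(R)|$, $c=|C_R(x)|$. Since $Z(R)\subsetneq C_R(x)\subsetneq R$ are additive subgroups and $z\mid c\mid n$, one has $c\ge 2z$ and $c\le n/2$; moreover, as already exploited in the paper, $R/Z(R)$ cannot be additively cyclic (else $R$ would be commutative by the standard computation).

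If $r=0$, the equation $n-c=3$ combined with $c\le n/2$ forces $n\le 6$; the cases $n\in\{4,5,6\}$ all fail, since either the ring is commutative by order, or one would need $c=1$, which is impossible because $\{0,x\}\subseteq C_R(x)$. The ``easy'' branches of Proposition \ref{induceddeg_cor_1}(b) and (c) both give $n-z=4$, so $z\mid 4$ yields $(z,n)\in\{(1,5),(2,6),(4,8)\}$; in each case $n/z$ is cyclic, hence $R$ is commutative.

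The substantive work lies in the remaining two branches. For the branch $n-z-c=4$ coming from (b), rewriting $n=z+c+4$ and using $c\mid n$ gives $c\mid z+4$; together with $c\ge 2z$ this forces $z\le 4$. Running through $z\in\{1,2,3,4\}$ and requiring $z\mid c$ leaves only the triples $(z,c,n)=(1,5,10),(2,6,12),(4,8,16)$. In the first two, $|R/Z(R)|\in\{10,6\}$ is cyclic and so $R$ is commutative; the third is ruled out by the hypothesis $n\ne 16$. For the branch $n-z-2c=4$ coming from (c), the same manipulation yields $c\mid z+4$ and $z\le 4$, and the surviving triples are $(1,5,15),(2,6,18),(4,8,24)$. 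The triples with $n\in\{15,24\}$ have $|R/Z(R)|\in\{15,6\}$ cyclic, forcing commutativity; and the triple $(2,6,18)$ is eliminated by $n\ne 18$.

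The main obstacle is the bookkeeping in the two ``hard'' subcases: the argument requires simultaneously invoking $z\mid c\mid n$, the strict chain $z<c<n$ (which upgrades to $c\ge 2z$ and $c\le n/2$), the derived divisibility $c\mid z+4$, and the non-cyclicity of $R/Z(R)$. The enumeration is finite but must be executed carefully, and the two residual survivors $n=16$ and $n=18$ are precisely what force the exclusion appearing in the hypothesis of the theorem.
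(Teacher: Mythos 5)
Your proposal is correct and follows essentially the same route as the paper: apply Proposition \ref{induceddeg_cor_1} in the three cases on $r$, combine the resulting degree equations with $|Z(R)|\mid |C_R(x)|\mid |R|$, the strict containments $Z(R)\subsetneq C_R(x)\subsetneq R$, and the fact that $R/Z(R)$ additively cyclic forces commutativity, leaving only $|R|=16$ and $|R|=18$ as survivors. Your packaging of the bookkeeping via $c\mid z+4$ and $2z\le c\le z+4$ is a slightly tidier way to reach the same finite list of triples that the paper enumerates case by case.
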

\begin{proof}
	Suppose $\Delta_R^r$ has a vertex $x$ of degree $3$.
	
	\noindent \textbf{Case 1:}  $r = 0$.
	
	By Proposition \ref{induceddeg_cor_1}(a), we have $\deg(x) = |R| - |C_R(x)|$. Therefore,  $|R| - |C_R(x)| = 3$ and hence $|C_R(x)| = 3$. Therefore, $|R| = 6$ and hence $R$ is commutative; a contradiction.   
	
	\noindent \textbf{Case 2:}   $r \neq 0$ and $2r = 0$.
	
	By Proposition \ref{induceddeg_cor_1}(b), we have  $deg(x) = |R| - |Z(R)| - 1$ or $deg(x) = |R| - |Z(R)| - |C_R(x)| - 1$. Therefore  $|R| - |Z(R)| - 1 = 3$ or $|R| - |Z(R)| - |C_R(x)| - 1 = 3$.
	
	\noindent \textbf{Subcase 2.1:}  $|R| - |Z(R)| = 4$.
	
	In this case we have $|Z(R)| = 1$ or $2$ or $4$. If $|Z(R)| = 1$ or $2$ then $|R| = 5$ or $6$ and hence $R$ is commutative; a contradiction. If $|Z(R)| = 4$ then $|R| = 8$. Therefore, the additive quotient group $\frac{R}{Z(R)}$ is cyclic.  Hence, $R$ is commutative; a contradiction. 
	
	\noindent \textbf{Subcase 2.2:}  $|R| - |Z(R)| - |C_R(x)| = 4$.
	
	In this case,  $|Z(R)| = 1$ or $2$ or $4$. If $|Z(R)| = 1$ then $|R| - |C_R(x)| = 5$. Therefore, $|C_R(x)| = 5$ and hence $|R| = 10$. Therefore $R$ is commutative; a contradiction.
	If $|Z(R)| = 2$ then $|R| - |C_R(x)|   = 6$. Therefore, $|C_R(x)| = 6$
and so $|R| = 12$.  It follows that the additive quotient group $\frac{R}{Z(R)}$ is cyclic.  Hence, $R$ is commutative; a contradiction.
	If $|Z(R)| = 4$ then $|R| - |C_R(x)|   = 8$. Therefore, $|C_R(x)| = 8$
and so $|R| = 16$; a contradiction.

	\noindent \textbf{Case 3:} $r \neq 0$ and  $2r\neq 0$.
	
	By Proposition \ref{induceddeg_cor_1}(c), we have $\deg(x) = |R| - |Z(R)| - 1$ or $|R|- |Z(R)| - 2|C_R(x)| -1$. Hence,  $|R| - |Z(R)| - 1 = 3$ or $|R| - |Z(R)| - 2|C_R(x)| -1 = 3$. 
	
	If $|R| - |Z(R)| = 4$ then as shown in Subcase 2.1 we get a contradiction. If $|R| - |Z(R)| - 2|C_R(x)|   = 4$ then $|Z(R)| = 1$ or $2$ or $4$.
	
	\noindent \textbf{Subcase 3.1:}  $|Z(R)| = 1$.
	
	In this case, $|R| - 2|C_R(x)| = 5$. Therefore, $|C_R(x)| = 5$ then $|R| = 15$. Therefore $R$ is commutative; a contradiction. 
	
	\noindent \textbf{Subcase 3.2:} $|Z(R)| = 2$.
	
	In this case, $|R| - 2|C_R(x)| = 6$. Therefore, $|C_R(x)| = 6$
and so $|R| = 18$; a contradiction.
	
	\noindent \textbf{Subcase 3.3:} $|Z(R)| = 4$.
	
	In this case, $|R| - 2|C_R(x)| = 8$. Therefore, $|C_R(x)| = 8$
and so $|R| = 24$. It follows that the additive quotient group $\frac{R}{Z(R)}$ is cyclic.  Hence, $R$ is commutative; a contradiction. 
		This completes the proof.
\end{proof}

\begin{corollary}
Let $R$ be a noncommutative ring such that $|R| \ne  16, 18$.  Then $\Delta_R^r$  is not $3$-regular. In particular, the noncommuting graph of such  $R$  is not $3$-regular.
\end{corollary}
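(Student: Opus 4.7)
The plan is to derive this corollary as an immediate consequence of the preceding theorem, which asserts that when $|R| \ne 16, 18$, the graph $\Delta_R^r$ contains no vertex of degree $3$. I would argue by contradiction: assume $\Delta_R^r$ is $3$-regular. By definition of regularity, every vertex of $\Delta_R^r$ has degree $3$. In particular, there exists at least one vertex of degree $3$ (since $R$ is noncommutative, $R \setminus Z(R) \ne \emptyset$, so $\Delta_R^r$ has vertices). This directly contradicts the previous theorem, which guarantees that no such vertex can exist under the hypothesis $|R| \ne 16, 18$. Hence $\Delta_R^r$ cannot be $3$-regular.

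For the second assertion, I would recall that the noncommuting graph of $R$ is precisely $\Delta_R^0$, as noted at the beginning of Section~4. Therefore, specializing $r = 0$ in the preceding statement immediately yields that the noncommuting graph of $R$ is not $3$-regular whenever $|R| \ne 16, 18$.

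There is no real obstacle here since both parts reduce to invoking the previous theorem; the proof is essentially a one-line contrapositive argument together with the observation about the $r = 0$ specialization. No additional casework or computation is required.
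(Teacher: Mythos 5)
Your argument is correct and matches the paper's (implicit) reasoning exactly: the corollary is an immediate consequence of the preceding theorem, since a $3$-regular graph on the nonempty vertex set $R \setminus Z(R)$ would contain a vertex of degree $3$, and the noncommuting graph is the case $r = 0$. Nothing further is needed.
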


\begin{theorem}  
Let $R$ be a noncommutative ring such that $|R| \ne  8, 12, 18, 20$.  Then $\Delta_R^r$ has no vertex of degree $4$.
\end{theorem}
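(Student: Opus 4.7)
The plan is to follow the case-analysis template used in the two preceding theorems. First I assume, for contradiction, that some $x \in R \setminus Z(R)$ has $\deg(x) = 4$ in $\Delta_R^r$, and I split the argument into the three branches of Proposition \ref{induceddeg_cor_1}: $r = 0$; $r \ne 0$ with $2r = 0$; and $r \ne 0$ with $2r \ne 0$. Within the last two branches I further split on whether $T_{x,r}$ is empty. In every subcase, setting $\deg(x) = 4$ yields a linear equation in $|R|$, $|Z(R)|$, and $|C_R(x)|$, which I solve using (i) the divisibility chain $|Z(R)| \mid |C_R(x)| \mid |R|$ arising from the inclusions $Z(R) \le C_R(x) \le R$ of additive subgroups, (ii) the parity constraint $2 \mid |R|$ whenever $2r = 0$ with $r \ne 0$, and (iii) the classical fact that a cyclic additive quotient $R/Z(R)$ forces $R$ to be commutative.

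Carrying this out in order, the case $r = 0$ gives $|R| - |C_R(x)| = 4$, hence $|R| - 4$ divides $4$, so $|R| \in \{5, 6, 8\}$; orders $5$ and $6$ force a cyclic additive group and hence commutativity, and $|R| = 8$ is excluded. In both nonzero-$r$ cases the subcase $T_{x,r} = \emptyset$ forces $|R| - |Z(R)| = 5$, so $|Z(R)|$ divides $5$; the two resulting orders $|R| = 6, 10$ are again commutative (cyclic additive group, or cyclic $R/Z(R)$ of order $2$). When $T_{x,r} \ne \emptyset$, the equations are $|R| - |Z(R)| - |C_R(x)| = 5$ in Case 2 and $|R| - |Z(R)| - 2|C_R(x)| = 5$ in Case 3; in both, $|Z(R)|$ must divide $5$, so $|Z(R)| \in \{1, 5\}$. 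Imposing $|C_R(x)| \mid |R|$ on each of these four possibilities restricts $|R|$ to the list $\{7, 8, 9, 10, 12, 15, 18, 20, 30\}$, and every entry outside $\{8, 12, 18, 20\}$ is eliminated by one of (i)--(iii) above: parity rules out odd orders in Case 2, a cyclic additive group rules out $|R| = 7, 10, 15, 30$, and a cyclic $R/Z(R)$ of order $2$, $3$, or $6$ rules out any remaining candidate.

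The principal obstacle I anticipate is purely bookkeeping: six subcases, each with up to four admissible pairs $(|Z(R)|, |C_R(x)|)$, must be enumerated without missing a divisor or duplicating an order, and the correct exclusion mechanism must be invoked for each residual $|R|$. No new structural idea beyond those already used in the degree-$2$ and degree-$3$ theorems is required; once the tabulation is checked and the four surviving orders are matched with the excluded set $\{8, 12, 18, 20\}$, the proof closes.
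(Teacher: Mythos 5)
Your proposal is correct and follows essentially the same route as the paper: the same three-way split via Proposition \ref{induceddeg_cor_1} (with the further split on whether $T_{x,r}=\emptyset$), the same divisibility chain $|Z(R)| \mid |C_R(x)| \mid |R|$ to pin down the candidate orders, and the same commutativity criteria (cyclic additive group, cyclic $R/Z(R)$) to eliminate everything outside $\{8,12,18,20\}$. The only cosmetic difference is that you kill the candidate $|R|=9$ in the $2r=0$ case by the parity constraint $2\mid |R|$, whereas the paper instead appeals to Figure 4 and the fact that $\Delta_{E(9)}^r = 4K_2$; both arguments are valid.
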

\begin{proof}
	Suppose $\Delta_R^r$ has a vertex $x$ of degree $4$.
	
	\noindent \textbf{Case 1:}  $r = 0$.
	
	By Proposition \ref{induceddeg_cor_1}(a), we have $\deg(x) = |R| - |C_R(x)|$. Therefore,  $|R| - |C_R(x)| = 4$ and hence $|C_R(x)| = 2$ or $4$. If $|C_R(x)| = 2$ then $|R| = 6$ and hence $R$ is commutative; a contradiction. If $|C_R(x)| = 4$ then $|R| = 8$; a contradiction.   
	
	\noindent \textbf{Case 2:}   $r \neq 0$ and $2r = 0$.
	
	By Proposition \ref{induceddeg_cor_1}(b), we have  $deg(x) = |R| - |Z(R)| - 1$ or $deg(x) = |R| - |Z(R)| - |C_R(x)| - 1$. Therefore  $|R| - |Z(R)| - 1 = 4$ or $|R| - |Z(R)| - |C_R(x)| - 1 = 4$.
	
	\noindent \textbf{Subcase 2.1:}  $|R| - |Z(R)| = 5$.
	
	In this case we have $|Z(R)| = 1$ or $5$. Then $|R| = 6$ or $10$ and hence $R$ is commutative; a contradiction.
	
	\noindent \textbf{Subcase 2.2:}  $|R| - |Z(R)| - |C_R(x)| = 5$.
	
	In this case,  $|Z(R)| = 1$ or $5$. If $|Z(R)| = 1$ then $|R| - |C_R(x)| = 6$. Therefore, $|C_R(x)| = 2$ or $3$ or $6$. If  $|C_R(x)| = 2$ then $|R| = 8$; a contradiction. If $|C_R(x)| = 3$ then $|R| = 9$. It follows from Figure $4$ that $\Delta^r_R = 4K_2$  which is a contradiction. If $|C_R(x)| = 6$ then $|R| = 12$; a contradiction.
	If $|Z(R)| = 5$ then $|R| - |C_R(x)| = 10$.  Therefore, $|C_R(x)| = 10$	
	and so $|R| = 20$; a contradiction.

	\noindent \textbf{Case 3:} $r \neq 0$ and  $2r\neq 0$.
	
	By Proposition \ref{induceddeg_cor_1}(c), we have $\deg(x) = |R| - |Z(R)| - 1$ or $|R|- |Z(R)| - 2|C_R(x)| -1$. Hence,  $|R| - |Z(R)| - 1 = 4$ or $|R| - |Z(R)| - 2|C_R(x)| -1 = 4$. 
	
	If $|R| - |Z(R)| = 5$ then as shown in Subcase 2.1 we get a contradiction. If $|R| - |Z(R)| - 2|C_R(x)|   = 5$ then $|Z(R)| = 1$ or $5$.
	
	\noindent \textbf{Subcase 3.1:}  $|Z(R)| = 1$.
	
	In this case, $|R| - 2|C_R(x)| = 6$. Therefore, $|C_R(x)| = 2$ or $3$ or $6$. If $|C_R(x)| = 2$ then $|R| = 10$. Therefore $R$ is commutative; a contradiction. If $|C_R(x)| = 3$ or $6$ then $|R| = 12$ or $18$; a contradiction. 
	
	\noindent \textbf{Subcase 3.2:} $|Z(R)| = 5$.
	
	In this case, $|R| - 2|C_R(x)| = 10$. Therefore, $|C_R(x)| = 10$
	and so $|R| = 30$. It follows that the additive quotient group $\frac{R}{Z(R)}$ is cyclic.  Hence, $R$ is commutative; a contradiction. 
	This completes the proof.
\end{proof}

\begin{corollary}
Let $R$ be a noncommutative ring such that $|R| \ne   8, 12, 18, 20$.  Then $\Delta_R^r$  is not $4$-regular. In particular, the noncommuting graph of such  $R$  is not $4$-regular.
\end{corollary}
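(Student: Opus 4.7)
The plan is to derive this corollary immediately from the previous theorem, which states that under the hypothesis $|R| \ne 8, 12, 18, 20$, the graph $\Delta_R^r$ has no vertex of degree $4$. A $k$-regular graph is, by definition, one in which every vertex has degree $k$; in particular, it must contain at least one vertex of degree $k$. Therefore, if $\Delta_R^r$ were $4$-regular for some $r$, the previous theorem would be contradicted. This gives the first assertion at once.

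For the ``in particular'' clause, recall that the noncommuting graph of $R$ coincides with $\Delta_R^0$, the induced subgraph corresponding to $r=0$. Since the first part of the corollary applies to every $r \in R$ (including $r=0$), we immediately conclude that the noncommuting graph of such $R$ is not $4$-regular. Thus the proof is a short two-sentence argument quoting the preceding theorem, and there is no genuine obstacle — the real content lies in the theorem, not in the corollary.

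\begin{proof}
If $\Delta_R^r$ were $4$-regular, then every vertex of $\Delta_R^r$ would have degree $4$; in particular, $\Delta_R^r$ would contain a vertex of degree $4$. Since $|R| \ne 8, 12, 18, 20$, this contradicts the preceding theorem. Hence $\Delta_R^r$ is not $4$-regular. Taking $r=0$, we note that $\Delta_R^0$ is precisely the noncommuting graph of $R$, and so the noncommuting graph of such $R$ is not $4$-regular either.
\end{proof}
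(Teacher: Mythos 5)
Your proof is correct and matches the paper's (implicit) argument exactly: the corollary is stated as an immediate consequence of the preceding theorem, since a $4$-regular graph on the nonempty vertex set $R \setminus Z(R)$ must contain a vertex of degree $4$, and $\Delta_R^0$ is the noncommuting graph. No issues.
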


\begin{theorem}  
Let $R$ be a noncommutative ring such that $|R| \ne  8,  16, 24, 27$.  Then $\Delta_R^r$ has no vertex of degree $5$.
\end{theorem}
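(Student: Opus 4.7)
The plan is to mirror the template of the preceding degree-$d$ theorems: I will suppose for contradiction that $\Delta_R^r$ contains a vertex $x$ with $\deg(x)=5$ and split into the three cases dictated by Proposition~\ref{induceddeg_cor_1}, namely (i) $r=0$, (ii) $r\neq 0$ with $2r=0$, and (iii) $r\neq 0$ with $2r\neq 0$. In each case the equation $\deg(x)=5$ becomes a short Diophantine relation in $n:=|R|$, $z:=|Z(R)|$, and $c:=|C_R(x)|$. Throughout, the three tools I will use repeatedly are Lagrange's theorem ($z\mid c\mid n$), the non-centrality bound $c\geq 2z$ (since $Z(R)\subsetneq C_R(x)$), and the cyclic-quotient reduction: whenever the additive group $R/Z(R)$ is cyclic, $R$ is commutative.

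First I would dispatch Case (i): the equation $n-c=5$ with $c\mid n$ forces $c\mid 5$, so $(n,c)\in\{(6,1),(10,5)\}$, and both force commutativity. Next, cases (ii) and (iii) each split into two subcases. The ``$T_{x,r}=\emptyset$'' subcase yields $n-z=6$ in both, and enumerating the divisors $z\in\{1,2,3,6\}$ of $6$ gives $n\in\{7,8,9,12\}$; since $n=7,9,12$ all have $|R/Z(R)|\in\{7,3,2\}$ (cyclic, hence $R$ commutative), only the excluded value $n=8$ survives.

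The heart of the proof will be the ``$T_{x,r}\neq\emptyset$'' subcase: $n=z+c+6$ in case (ii) and $n=z+2c+6$ in case (iii). In both relations, $c\mid n$ combined with subtraction gives $c\mid z+6$, which together with $c\geq 2z$ forces $z\leq 6$; for $z\in\{4,5\}$ no legal $c$ exists. I would enumerate the remaining values, producing in case (ii) the triples $(z,c,n)\in\{(1,7,14),(2,4,12),(2,8,16),(3,9,18),(6,12,24)\}$ and in case (iii) the triples $\{(1,7,21),(2,4,16),(2,8,24),(3,9,27),(6,12,36)\}$. Every non-excluded triple has $|R/Z(R)|\in\{6,14,21\}$, and since every abelian group of these orders (each a squarefree product of two distinct primes) is cyclic, $R/Z(R)$ is cyclic and $R$ is commutative, contradicting the hypothesis. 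The surviving orders are exactly $n\in\{16,24,27\}$, which combined with $n=8$ from the uniform subcases yields the required exclusion list $\{8,16,24,27\}$.

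The hard part will be the bookkeeping in the non-uniform subcase: there are two separate enumerations over $z\in\{1,\dots,6\}$, and each admissible triple requires a short check to decide which of the three disposal methods (Lagrange infeasibility, cyclic-quotient reduction, or membership in the excluded list) applies. I expect the written-up proof to closely parallel the degree-$4$ theorem, organized as Cases~1--3 with Subcases~2.1/2.2 and 3.1/3.2.
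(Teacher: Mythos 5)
Your proposal is correct and follows essentially the same route as the paper: the case split on $r=0$, $r\neq 0$ with $2r=0$, and $r\neq 0$ with $2r\neq 0$ via Proposition~\ref{induceddeg_cor_1}, followed by a divisibility enumeration over $|Z(R)|$ and $|C_R(x)|$ with the cyclic-quotient reduction disposing of the non-excluded orders. Your packaging is slightly tidier (the uniform observation $c\mid z+6$, and handling $|R|=9$ with $|Z(R)|=3$ by the cyclic quotient rather than the paper's appeal to Figure~4), but the argument and the resulting exclusion list $\{8,16,24,27\}$ coincide with the paper's.
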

\begin{proof}
	Suppose $\Delta_R^r$ has a vertex $x$ of degree $5$.
	
	\noindent \textbf{Case 1:}  $r = 0$.
	
	By Proposition \ref{induceddeg_cor_1}(a), we have $\deg(x) = |R| - |C_R(x)|$. Therefore,  $|R| - |C_R(x)| = 5$ and hence $|C_R(x)| = 5$. Then $|R| = 10$ and hence $R$ is commutative; a contradiction.   
	
	\noindent \textbf{Case 2:}   $r \neq 0$ and $2r = 0$.
	
	By Proposition \ref{induceddeg_cor_1}(b), we have  $deg(x) = |R| - |Z(R)| - 1$ or $deg(x) = |R| - |Z(R)| - |C_R(x)| - 1$. Therefore  $|R| - |Z(R)| - 1 = 5$ or $|R| - |Z(R)| - |C_R(x)| - 1 = 5$.
	
	\noindent \textbf{Subcase 2.1:}  $|R| - |Z(R)| = 6$.
	
	In this case we have $|Z(R)| = 1$ or $2$ or $3$ or $6$. If  $|Z(R)| = 1$ then $|R| = 7$ and hence $R$ is commutative; a contradiction. 
	If  $|Z(R)| = 2$ then $|R| = 8$; a contradiction. 
	If  $|Z(R)| = 3$ then $|R| = 9$. It follows from Figure $4$ that $\Delta^r_R = 4K_2$  which is a contradiction. 
	If  $|Z(R)| = 6$ then $|R| = 12$. Therefore, the additive quotient group $\frac{R}{Z(R)}$ is cyclic.  Hence, $R$ is commutative; a contradiction.
	
	\noindent \textbf{Subcase 2.2:}  $|R| - |Z(R)| - |C_R(x)| = 6$.
	
	In this case,  $|Z(R)| = 1$ or $2$ or $3$ or $6$. If $|Z(R)| = 1$ then $|R| - |C_R(x)| = 7$. Therefore, $|C_R(x)| = 7$ then $|R| = 14$ and hence $R$ is commutative; a contradiction. 
	If $|Z(R)| = 2$ then $|R| - |C_R(x)| = 8$.  Therefore, $|C_R(x)| =  4$ or $8$. 
	If $|C_R(x)| = 4$ then $|R| = 12$. Therefore, the additive quotient group $\frac{R}{Z(R)}$ is cyclic.  Hence, $R$ is commutative; a contradiction.
	If $|C_R(x)| = 8$ then  $|R| = 16$; a contradiction. If $|Z(R)| = 3$ then $|R| - |C_R(x)| = 9$.  Therefore, $|C_R(x)| = 9$. 
	and so $|R| = 18$. It follows that the additive quotient group $\frac{R}{Z(R)}$ is cyclic.  Hence, $R$ is commutative; a contradiction.  If $|Z(R)| = 6$ then $|R| - |C_R(x)| = 12$.  Therefore, $|C_R(x)| = 12$ 
	and so $|R| = 24$; a contradiction.

	\noindent \textbf{Case 3:} $r \neq 0$ and  $2r\neq 0$.
	
	By Proposition \ref{induceddeg_cor_1}(c), we have $\deg(x) = |R| - |Z(R)| - 1$ or $|R|- |Z(R)| - 2|C_R(x)| -1$. Hence,  $|R| - |Z(R)| - 1 = 5$ or $|R| - |Z(R)| - 2|C_R(x)| -1 = 5$. 
	
	If $|R| - |Z(R)| = 6$ then as shown in Subcase 2.1 we get a contradiction. If $|R| - |Z(R)| - 2|C_R(x)|   = 6$ then $|Z(R)| = 1$  or $2$ or $3$ or $6$.
	
	\noindent \textbf{Subcase 3.1:}  $|Z(R)| = 1$.
	
	Here we have, $|R| - 2|C_R(x)| = 7$. Therefore, $|C_R(x)| = 7$ then $|R| = 21$ and hence $R$ is commutative; a contradiction.
	
	\noindent \textbf{Subcase 3.2:} $|Z(R)| = 2$.
	
	In this case, $|R| - 2|C_R(x)| = 8$. Therefore, $|C_R(x)| = 4$ or $8$. 
	If $|C_R(x)| = 4$ or $8$  then  $|R| = 16$ or $24$; a contradiction.
	
	\noindent \textbf{Subcase 3.3:} $|Z(R)| = 3$.
	
	In this case, $|R| - 2|C_R(x)| = 9$. Therefore, $|C_R(x)| = 9$
	and so $|R| = 27$; a contradiction.
	
	\noindent \textbf{Subcase 3.4:} $|Z(R)| = 6$.
	
	In this case, $|R| - 2|C_R(x)| = 12$. Therefore, $|C_R(x)| = 12$
and so $|R| = 36$. It follows that the additive quotient group $\frac{R}{Z(R)}$ is cyclic.  Hence, $R$ is commutative; a contradiction.
	This completes the proof.
\end{proof}

\begin{corollary}
Let $R$ be a noncommutative ring such that $|R| \ne   8,  16, 24,27$.  Then $\Delta_R^r$  is not $5$-regular. In particular, the noncommuting graph of such  $R$  is not $5$-regular.
\end{corollary}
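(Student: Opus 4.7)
The plan is to imitate exactly the template used for the preceding theorems on degrees $1, 2, 3, 4$. Assume for contradiction that $\Delta_R^r$ contains a vertex $x$ with $\deg(x) = 5$, and split into three cases according to Proposition \ref{induceddeg_cor_1}: (Case 1) $r = 0$, giving $|R| - |C_R(x)| = 5$; (Case 2) $r \neq 0$ and $2r = 0$, giving $|R| - |Z(R)| = 6$ or $|R| - |Z(R)| - |C_R(x)| = 6$; (Case 3) $r \neq 0$ and $2r \neq 0$, giving $|R| - |Z(R)| = 6$ or $|R| - |Z(R)| - 2|C_R(x)| = 6$. In every subcase I exploit the Lagrange-type divisibilities $|Z(R)| \mid |C_R(x)| \mid |R|$ (both are additive subgroups), together with $|C_R(x)|/|Z(R)| \geq 2$ (since $x$ is non-central) and $|R|/|C_R(x)| \geq 2$ (since $R$ is noncommutative), to pin down a finite list of candidate triples $(|Z(R)|, |C_R(x)|, |R|)$.

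For each surviving candidate I apply one of the following three standard obstructions: (i) if $|R|$ is squarefree (e.g.\ $|R| \in \{10, 14, 21\}$) or is a prime power of small order forcing commutativity, then $R$ is commutative; (ii) if $|R/Z(R)|$ is $1, p$, or a product of two distinct primes (orders $1, 2, 3, 5, 6, 7, \ldots$), then the additive quotient $R/Z(R)$ is cyclic and so $R$ is commutative (this is the same computation used repeatedly in the preceding proofs); (iii) for $|R| = 9$, the only noncommutative rings are $E(9)$ and $F(9)$, and from Figure 4 their $\Delta_R^r$ equals $4K_2$, which has no vertex of degree $5$. The only candidates that survive (i)--(iii) are precisely $|R| \in \{8, 16, 24, 27\}$, which are excluded by hypothesis.

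Concretely, in Case 1 the equation $|C_R(x)|(|R|/|C_R(x)| - 1) = 5$ forces $|R| \in \{6, 10\}$, both commutative. In the subcases with $|R| - |Z(R)| = 6$ the divisibility $|Z(R)| \mid 6$ gives $|Z(R)| \in \{1, 2, 3, 6\}$ and hence $|R| \in \{7, 8, 9, 12\}$, handled by (i)--(iii) or excluded. For $|R| - |Z(R)| - |C_R(x)| = 6$ I set $|C_R(x)| = m|Z(R)|$ and $|R| = km|Z(R)|$ with $k, m \geq 2$; the resulting Diophantine equation $|Z(R)|(km - m - 1) = 6$ yields a short finite list whose only non-commutative survivors lie in $\{16, 24\}$. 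The analogous Case 3 equation $|Z(R)|(km - 2m - 1) = 6$ similarly produces only $|R| \in \{21, 16, 24, 27, 36\}$, and the first and last are commutative by (i)--(ii).

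The main obstacle is simply the bookkeeping: the number of $(|Z(R)|, |C_R(x)|)$ candidates for degree $5$ is noticeably larger than for the smaller degrees, so I will need to be systematic in enumerating the divisors of $6$ (and of $5$) and in checking, for each resulting $|R|$, whether it is squarefree, forces $R/Z(R)$ cyclic, equals $9$, or falls into the excluded set $\{8, 16, 24, 27\}$. No new idea is required beyond what already appears in the proofs of the degree $2, 3, 4$ theorems.
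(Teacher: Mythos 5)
Your proposal is correct and follows essentially the same route as the paper: the corollary is an immediate consequence of the preceding theorem (no vertex of degree $5$ when $|R| \ne 8, 16, 24, 27$), and your case analysis via Proposition \ref{induceddeg_cor_1}, the divisibilities $|Z(R)| \mid |C_R(x)| \mid |R|$, the cyclic-quotient and squarefree-order commutativity criteria, and the $|R|=9$ check against Figure 4 reproduces the paper's proof of that theorem subcase for subcase. No discrepancies worth noting.
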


We conclude this section with the following characterization of $R$.
\begin{theorem}  
Let $R$ be a noncommutative ring such that $|R| \ne  8, 12, 16$, $24, 28$.  Then $\Delta_R^r$ has a vertex of degree $6$ if and only if  $r=0$ and $R$ is isomorphic to $E(9)$ or $F(9)$. 
\end{theorem}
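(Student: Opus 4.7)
The plan is to follow the template of the preceding degree-$d$ theorems in this section: apply Proposition \ref{induceddeg_cor_1} to a hypothetical vertex $x \in R \setminus Z(R)$ of degree $6$ in $\Delta_R^r$ and eliminate every candidate value of $|R|$ outside the excluded set except the one that yields the stated conclusion. For the easy direction I would simply inspect Figure 3: the vertex $a$ in $\Gamma_{E(9)}^0$ is adjacent to precisely the six elements $b, 2b, a+b, 2a+2b, a+2b, 2a+b$, and since $|Z(E(9))|=1$, the vertex $a$ retains degree $6$ in the induced subgraph $\Delta_{E(9)}^0$. The corresponding statement for $F(9)$ follows from the isomorphism remark after Figure 4.

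For the forward direction, split on $r$. If $r=0$, Proposition \ref{induceddeg_cor_1}(a) gives $|R|-|C_R(x)|=6$; since $C_R(x)$ is an additive subgroup with $x \notin Z(R)$, $|C_R(x)|$ is a proper divisor of $|R|$ that also divides $6$, leaving only the triples $(|C_R(x)|,|R|) \in \{(2,8),(3,9),(6,12)\}$. The sizes $8$ and $12$ are excluded by hypothesis, and $|R|=9$ forces $R \cong E(9)$ or $F(9)$ by the classification recalled in the introduction. If $r \neq 0$, Proposition \ref{induceddeg_cor_1}(b),(c) gives either $|R|-|Z(R)|=7$ or $|R|-|Z(R)|-k|C_R(x)|=7$ with $k \in \{1,2\}$ depending on whether $2r=0$. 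Using $|Z(R)| \mid |C_R(x)| \mid |R|$ together with the primality of $7$, we obtain $|Z(R)| \in \{1,7\}$, and enumerating admissible $|C_R(x)| \geq 2|Z(R)|$ produces the finite list $|R| \in \{8,10,12,14,16,24,28,42\}$. Each of these either lies in the excluded set $\{8,12,16,24,28\}$ or yields $|R|/|Z(R)| \in \{2,6,10\}$, in which case the additive quotient is the unique cyclic group of that order and $R$ becomes commutative, a contradiction.

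The main obstacle is the arithmetic bookkeeping: one must ensure that no $(|Z(R)|,|C_R(x)|,|R|)$ triple slips past the divisibility constraints. The primality of $7$ is what collapses the $|Z(R)|$ possibilities to $\{1,7\}$; the strict inequality $|C_R(x)| \geq 2|Z(R)|$ coming from $x \notin Z(R)$ shortens each further list to at most three values; and the cyclicity of additive groups of orders $2$, $6$, $10$ disposes of every noncommutative candidate not already eliminated by hypothesis. An incidental sanity check is that $|R|=9$ does not reappear in the $r \neq 0$ branch, consistent with $E(9)$ and $F(9)$ having characteristic $3$, so that $r \neq 0$ forces $2r \neq 0$ and Figure 4 already shows no vertex of degree $6$ in $\Delta_{E(9)}^r$ for $r \neq 0$.
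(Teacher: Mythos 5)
Your proposal is correct and follows essentially the same route as the paper: apply Proposition \ref{induceddeg_cor_1} to a degree-$6$ vertex, split into the cases $r=0$, $r\neq 0$ with $2r=0$, and $r\neq 0$ with $2r\neq 0$, use the divisibility chain $|Z(R)| \mid |C_R(x)| \mid |R|$ to enumerate the finitely many candidate orders, and eliminate each via the excluded orders or the cyclicity of the additive quotient $\frac{R}{Z(R)}$ (your candidate list $\{8,10,12,14,16,24,28,42\}$ matches the orders ruled out in the paper's subcases). The only cosmetic difference is that you make the converse direction explicit via Figure 3, which the paper leaves implicit.
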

\begin{proof}
	Suppose $\Delta_R^r$ has a vertex $x$ of degree $6$.
	
	\noindent \textbf{Case 1:}  $r = 0$.
	
	By Proposition \ref{induceddeg_cor_1}(a), we have $\deg(x) = |R| - |C_R(x)|$. Therefore,  $|R| - |C_R(x)| = 6$ and hence $|C_R(x)| = 2$ or $3$ or $6$. If $|C_R(x)| = 2$ then $|R| = 8$; a contradiction. If $|C_R(x)| = 3$ then $|R| = 9$. Therefore, $\Delta_R^r$ is a $6$-regular graph (as shown in Figure 3).    If $|C_R(x)| = 6$ then $|R| = 12$; a contradiction.   
	
	\noindent \textbf{Case 2:}   $r \neq 0$ and $2r = 0$.
	
	By Proposition \ref{induceddeg_cor_1}(b), we have  $deg(x) = |R| - |Z(R)| - 1$ or $deg(x) = |R| - |Z(R)| - |C_R(x)| - 1$. Therefore  $|R| - |Z(R)| - 1 = 6$ or $|R| - |Z(R)| - |C_R(x)| - 1 = 6$.
	
	\noindent \textbf{Subcase 2.1:}  $|R| - |Z(R)| = 7$.
	
	In this case we have $|Z(R)| = 1$ or $7$. If  $|Z(R)| = 1$ then $|R| = 8$; a contradiction. If  $|Z(R)| = 7$ then $|R| = 14$ and hence $R$ is commutative; a contradiction.
	
	\noindent \textbf{Subcase 2.2:}  $|R| - |Z(R)| - |C_R(x)| = 7$.
	
	In this case,  $|Z(R)| = 1$ or $7$. If $|Z(R)| = 1$ then $|R| - |C_R(x)| = 8$. Therefore, $|C_R(x)| = 2$ or $4$ or $8$. If  $|C_R(x)| = 2$ then $|R| = 10$. Thus $R$ is commutative; a contradiction. If $|C_R(x)| = 4$ or $8$ then $|R| = 12$ or $16$; which are contradictions. 
	If $|Z(R)| = 7$ then $|R| - |C_R(x)| = 14$.  Therefore, $|C_R(x)| = 14$
and so $|R| = 28$; a contradiction.

	\noindent \textbf{Case 3:} $r \neq 0$ and  $2r\neq 0$.
	
	By Proposition \ref{induceddeg_cor_1}(c), we have $\deg(x) = |R| - |Z(R)| - 1$ or $|R|- |Z(R)| - 2|C_R(x)| -1$. Hence,  $|R| - |Z(R)| - 1 = 6$ or $|R| - |Z(R)| - 2|C_R(x)| -1 = 6$. 
	
	If $|R| - |Z(R)| = 7$ then as shown in Subcase 2.1 we get a contradiction. If $|R| - |Z(R)| - 2|C_R(x)|   = 7$ then $|Z(R)| = 1$ or $7$.
	
	\noindent \textbf{Subcase 3.1:}  $|Z(R)| = 1$.
	
	In this case, $|R| - 2|C_R(x)| = 8$. Therefore, $|C_R(x)| = 2$ or $4$ or $8$ then $|R| = 12$ or $16$ or $24$; all are contradictions to the order of $R$.
	
	\noindent \textbf{Subcase 3.2:} $|Z(R)| = 7$.
	
	In this case, $|R| - 2|C_R(x)| = 14$. Therefore, $|C_R(x)| = 14$
and so	$|R| = 42$. It follows that the additive quotient group $\frac{R}{Z(R)}$ is cyclic.  Hence, $R$ is commutative; a contradiction. 
		This completes the proof.
\end{proof}

\begin{corollary}  
Let $R$ be a noncommutative ring such that $|R| \ne  8, 12, 16$, $24, 28$.  Then $\Delta_R^r$ is $6$-regular if and only if  $r=0$ and $R$ is isomorphic to $E(9)$ or $F(9)$. In particular, the noncommuting graph of such  $R$  is $6$-regular if and only if    $R$ is isomorphic to $E(9)$ or $F(9)$.
\end{corollary}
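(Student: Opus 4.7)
The plan is to mirror the strategy used in the preceding theorems (for degrees $1,2,3,4,5$): assume some vertex $x \in R\setminus Z(R)$ has degree $6$ in $\Delta_R^r$, apply Proposition \ref{induceddeg_cor_1}, and split according to the three regimes $r=0$, $\{r\neq 0,\, 2r=0\}$, and $\{r\neq 0,\, 2r\neq 0\}$. In each regime the degree equation collapses to one of
\[
|R|-|C_R(x)|=6, \quad |R|-|Z(R)|=7, \quad |R|-|Z(R)|-|C_R(x)|=7, \quad |R|-|Z(R)|-2|C_R(x)|=7,
\]
so the job is to enumerate triples $(|R|,|Z(R)|,|C_R(x)|)$ consistent with these equations together with the divisibility constraints $|Z(R)|\mid|C_R(x)|\mid|R|$ (since $Z(R)\subseteq C_R(x)$ are additive subgroups of $R$).

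After writing down the divisors, the routine eliminations are: if $|R|/|Z(R)|$ is prime then $R/Z(R)$ is cyclic and $R$ commutative; if $|R|$ itself is $\leq$ a small bound or equal to $p$ or $p^2$ for an \emph{abelian} size, $R$ is commutative; the orders $|R|\in\{8,12,16,24,28\}$ are excluded by hypothesis; and $|R|=9$ with $|Z(R)|\neq 1$ is ruled out since the two noncommutative rings of order $p^2$ (namely $E(9)$ and $F(9)$) both have trivial center. The only surviving branch is $r=0$, $|Z(R)|=1$, $|C_R(x)|=3$, $|R|=9$, so that $R\cong E(9)$ or $R\cong F(9)$ by \cite{fine93}.

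For the converse, I would appeal directly to Figure~3 (together with the remark immediately after the figures that $\Gamma_{F(9)}^0\cong\Gamma_{E(9)}^0$) to observe that $\Delta_{E(9)}^0=\Gamma_{E(9)}^0$, since $|Z(E(9))|=1$ so no vertex is deleted, and that this graph is in fact $6$-regular. Hence every vertex has degree $6$, giving the ``if'' direction.

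The main obstacle will be sub-case 2.2 with $|Z(R)|=1$, where $|R|-|C_R(x)|=8$ admits three choices $|C_R(x)|\in\{2,4,8\}$, producing the candidate orders $10,12,16$; each must be killed individually (commutativity, exclusion by hypothesis). The analogous triple in Case~3 produces $\{12,16,24\}$, all excluded. The bookkeeping is tedious rather than deep, but care is needed to verify that every divisor pair is exhausted and that the divisibility $|Z(R)|\mid|C_R(x)|$ is used to discard impossible combinations (for instance, ruling out $|Z(R)|=7$, $|C_R(x)|=14$ once the resulting $|R|=28$ is excluded by hypothesis).
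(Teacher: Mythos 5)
Your proposal is correct and takes essentially the same route as the paper: the corollary there is deduced immediately from the preceding theorem (a $6$-regular graph has a vertex of degree $6$), and that theorem's proof is precisely the case analysis on $(|R|,|Z(R)|,|C_R(x)|)$ via Proposition \ref{induceddeg_cor_1} that you describe, with the converse read off from Figure 3. The only cosmetic difference is that you inline the theorem's argument instead of citing it.
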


\section*{Acknowledgment}
M. Sharma would like to thank  DST for the INSPIRE Fellowship.

\end{document}